\documentclass[12pt]{article}

\usepackage{amsfonts,amssymb,amsmath,exscale,relsize,enumitem,amsthm,mathtools, physics}
\usepackage{graphicx}
\usepackage{float}
\usepackage[thinlines]{easytable}
\usepackage{csquotes}
\usepackage{setspace}

\usepackage[authoryear,round]{natbib}
\usepackage[
colorlinks=true,
citecolor=blue,
linkcolor=blue,
urlcolor=blue,
pagebackref=true
]{hyperref}

\usepackage{varioref}
\usepackage{subcaption}
\usepackage{xcolor}
\usepackage{setspace}
\usepackage{appendix}
\usepackage[margin=1.35in]{geometry}
\usepackage{setspace} 

\newtheorem{theorem}{Theorem}
\newtheorem{lemma}{Lemma}
\newtheorem{corollary}{Corollary}
\newtheorem{assumption}{Assumption}

\title{Asymptotic Bayes Optimality Under Sparsity of the Gavrilov--Benjamini--Sarkar Step-Down Testing Procedure}
\author{
	Prasenjit Ghosh\thanks{Department of Statistics, Texas A\&M University, College Station, TX 77843, USA. Email: \texttt{prasenjit@stat.tamu.edu}}%
	\hspace{1em} 
	Arijit Chakrabarti\thanks{Applied Statistics Unit, Indian Statistical Institute, Kolkata - 700108, India. Email: \texttt{arc@isical.ac.in}}%
}

\date{} 

\begin{document}
	
	\maketitle
	
\begin{abstract}
	
In this article, we investigate the asymptotic Bayes optimality under sparsity (ABOS) of the Gavrilov--Benjamini--Sarkar (GBS) step-down multiple testing procedure \citep{GBS2009} in the sparse Gaussian sequence model. While the asymptotic optimality properties of the Benjamini--Hochberg procedure have been extensively studied, corresponding results for the GBS procedure remain unavailable despite its favorable finite-sample performance and widespread applicability. Within the spike-and-slab Bayesian formulation and the asymptotic decision-theoretic framework of \citet{BCFG2011}, we establish that the GBS procedure is ABOS over a broad class of sparse asymptotic regimes. Existing analyses of the Benjamini--Hochberg procedure rely on approximating the random rejection threshold by a suitable deterministic surrogate. In contrast, our approach proceeds through a direct analysis of the Bayes risk by separately controlling the false discovery and false nondiscovery components. The analysis combines two key ingredients: a new finite-sample inequality for shifted order statistics associated with the GBS critical constants and a signal-crossing argument for ordered alternative \(p\)-values that exploits the sequential step-down structure of the procedure. Together, these tools yield the desired ABOS property without resorting to threshold-localization arguments. To the best of our knowledge, this is the first asymptotic decision-theoretic analysis of the GBS procedure and the first proof of its asymptotic Bayes optimality under sparsity.

	\medskip
	
	\noindent
	\textbf{MSC 2020 Subject Classifications:}
	62C15, 62C20, 62F03, 62G10, 62J15.
	
	\medskip
	
	\noindent
	\textbf{Keywords:}
	Asymptotic Bayes optimality under sparsity;
	False discovery rate;
	Gavrilov--Benjamini--Sarkar procedure;
	Multiple testing;
	Sparse Gaussian sequence model;
	Step-down procedures.
	
\end{abstract}

\section{Introduction}

Sparse signal discovery has emerged as one of the central challenges in modern statistical inference. Contemporary scientific investigations routinely involve the simultaneous examination of a very large number of candidate variables in order to identify a relatively small number of meaningful signals hidden among a vast collection of null effects. Such problems arise naturally in genomics, bioinformatics, neuroimaging, astronomy, finance, economics, medicine, environmental science, and numerous other disciplines. The resulting inferential task is inherently high-dimensional and requires statistical procedures capable of distinguishing true signals from noise while maintaining adequate control over erroneous decisions. Consequently, multiple hypothesis testing has become a fundamental tool in the analysis of large-scale datasets and has attracted sustained attention over the past several decades.

The need for simultaneous inference naturally introduces multiplicity issues, since procedures designed for individual testing problems may accumulate errors rapidly when applied repeatedly across a large collection of hypotheses. Consequently, a vast literature has developed on multiple testing procedures controlling global measures of Type-I error. Among the most widely studied criteria are the family-wise error rate (FWER), defined as the probability of making at least one false rejection, and the false discovery rate (FDR), defined as the expected proportion of false discoveries among all rejected hypotheses. While classical procedures such as the Bonferroni correction provide rigorous control of the FWER, they often become excessively conservative in large-scale problems involving thousands or even millions of simultaneous tests. This limitation has motivated the development of less stringent error criteria that allow for a more favorable balance between false discoveries and false non-discoveries.

The introduction of the false discovery rate by \citet{BH1995} marked a major turning point in the multiple testing literature. Under independence, \citet{BH1995} demonstrated that a simple step-up procedure controls the FDR at a prescribed level while often achieving substantially greater power than traditional FWER-controlling methods. Since then, FDR control has become one of the dominant paradigms in large-scale inference, leading to an extensive body of methodological developments; see, for example, \citet{BL1999}, \citet{BY2001}, \citet{STO_2002}, \citet{STS_2004}, \citet{BKY2006}, \citet{SKS2008}, \citet{BLROQ2009}, \citet{GBS2009}, \citet{SUN_CAI_2009}, and the references therein. Owing to their ability to maintain a favorable balance between false discoveries and false non-discoveries, FDR-based methodologies have become central to modern multiple testing theory and practice.

Alongside the development of procedures controlling global error rates, considerable effort has been devoted to developing and analyzing multiple testing procedures within the Bayesian paradigm. While FWER- and FDR-based methodologies focus primarily on controlling specific measures of Type-I error, Bayesian formulations provide a complementary framework in which multiple testing procedures can be evaluated through their overall decision-theoretic performance. A natural Bayesian formulation of sparse multiple testing problems is provided by the two-groups model, in which each unknown parameter is assumed to arise from a mixture of a null component and a signal component. Such formulations, often referred to as spike-and-slab models, have played a central role in the Bayesian analysis of sparse high-dimensional problems. The two-groups model provides a convenient probabilistic framework for distinguishing signals from noise while simultaneously allowing information to be borrowed across tests through the common model parameters. Fully Bayesian and empirical Bayes approaches based on this formulation have been widely studied in the multiple testing literature; see, for example, \citet{SB2006}, \citet{Efron2004}, \citet{Storey2007}, \citet{Efron2008}, and \citet{BGT2008}. Moreover, \citet{CPS2009} remarked that a suitably chosen two-groups formulation may be regarded as a \enquote{gold standard} for sparse inference problems.

Within the two-groups framework, each multiple testing procedure can be evaluated through its Bayes risk, typically defined under an additive loss function that assigns separate penalties to false discoveries and false non-discoveries. This formulation naturally leads to the Bayes Oracle, namely the decision rule that minimizes the Bayes risk under the underlying two-groups model. Since the Bayes Oracle depends on unknown model parameters and is therefore generally unavailable in practice, a fundamental question is whether practically implementable multiple testing procedures can asymptotically attain the Oracle risk. This question gave rise to the notion of asymptotic Bayes optimality under sparsity (ABOS), introduced by \citet{BCFG2011}. A testing procedure is said to possess the ABOS property if its Bayes risk is asymptotically equivalent to the Oracle risk, that is, if the ratio of the two risks converges to one as the number of tests grows to infinity under sparsity. Consequently, ABOS provides a natural decision-theoretic yardstick for evaluating large-scale multiple testing procedures and has emerged as one of the most influential criteria for assessing whether practically implementable methods can asymptotically emulate ideal Oracle performance.

A substantial body of work has subsequently developed around the study of optimal multiple testing procedures under sparsity from both Bayesian and frequentist perspectives. Important themes include asymptotic Bayes optimality under sparsity (ABOS), Oracle risk approximations, local false discovery rate methodologies, and minimaxity considerations; see, for example, \citet{ABDJ2006}, \citet{SUN_CAI_2007}, \citet{BGT2008}, \citet{BCFG2011}, \citet{NR2012}, \citet{DG2013}, \citet{GTGC2015}, \citet{GC2016}, \citet{PaulChakrabarti2025AISM}, and references therein. For frequentist procedures, \citet{BGT2008} and \citet{BCFG2011} established asymptotic Bayes optimality results for the Bonferroni correction and the Benjamini--Hochberg procedure under suitable sparse asymptotic regimes, thereby providing some of the earliest decision-theoretic justifications for widely used multiple testing methodologies. Related developments have also appeared in the local false discovery rate literature; see, for example, \citet{SUN_CAI_2007} and \citet{NR2012}.

More recently, considerable attention has been devoted to Bayesian multiple testing procedures induced by continuous shrinkage priors. A series of works has investigated their asymptotic properties from both estimation and testing perspectives; see, for example, \citet{DG2013}, \citet{GTGC2015}, \citet{GC2016}, \citet{Paul2025ABOSCount}, and \citet{PaulChakrabarti2025AISM}, among others. Together, these investigations established that testing procedures induced by one-group global--local shrinkage priors can attain asymptotic Bayes optimality under sparsity across a variety of sparse inference settings. More recently, attention has shifted toward increasingly refined characterizations of the fundamental limits of sparse multiple testing. Notable developments include the sharp multiple testing boundary established by \citet{ACR2024} and the sharp asymptotic minimaxity results for testing procedures induced by one-group global--local shrinkage priors obtained by \citet{PGC2025}. Collectively, these works have significantly advanced our understanding of Bayes-optimal sparse signal recovery under independence and clarified the extent to which computationally feasible procedures can approximate ideal Oracle performance.

Among the many procedures proposed for FDR control, the Benjamini--Hochberg (BH) procedure \citep{BH1995} occupies a particularly prominent position. Its simplicity, strong empirical performance, and favorable asymptotic properties have made it one of the most widely used multiple testing procedures in modern statistics. Motivated by the desire to increase power while retaining rigorous false discovery rate control, \citet{GBS2009} introduced the Gavrilov--Benjamini--Sarkar (GBS) step-down procedure. Unlike the BH procedure, which employs a step-up mechanism, the GBS procedure utilizes a step-down structure and was specifically designed to increase the number of discoveries while maintaining FDR control under independence. The simulation studies reported in \citet{GBS2009} demonstrated that the GBS procedure can substantially outperform the Benjamini--Hochberg procedure in finite samples while maintaining false discovery rate control, particularly in sparse signal detection settings. Consequently, the GBS procedure has emerged as one of the most prominent alternatives to the BH procedure in the independence setting and has attracted considerable attention in both the methodological and applied multiple testing literature. From a decision-theoretic perspective, it is therefore natural to ask whether the superior finite-sample performance of the GBS procedure is accompanied by asymptotically optimal risk behavior.

Despite its attractive finite-sample performance and widespread use, relatively little is known about the asymptotic decision-theoretic behavior of the GBS procedure. In particular, while asymptotic Bayes optimality under sparsity has been established for the BH procedure and several Bayesian multiple testing procedures, corresponding results for the GBS procedure appear to be unavailable. At first glance, one might expect such results to follow from existing analyses of the BH procedure. However, the arguments used in the BH literature rely heavily on the specific structure of the BH rejection rule and do not transfer directly to the GBS setting. Consequently, it remains unclear whether the favorable finite-sample properties of the GBS procedure are accompanied by asymptotically optimal risk behavior in sparse high-dimensional settings.


More recently, investigations of dependence-aware multiple testing procedures have provided additional evidence regarding the decision-theoretic performance of the GBS methodology. In particular, \citet{GhoshChakrabartiBSD2026} studied the MRD--GBS method \citep{ghosh2026covariance}, a residual-based step-down testing procedure designed to extend the GBS philosophy to dependent settings. Under independence, MRD--GBS reduces exactly to the classical GBS procedure. Extensive simulation studies reported therein revealed a striking empirical phenomenon: the Bayes risk of MRD--GBS remained remarkably close to that of the Bayes Oracle across a wide range of sparse Gaussian one-factor models, with independence arising as a special case. This near-Oracle behavior persisted even in relatively small dimensions. Since MRD--GBS coincides with GBS under independence, these findings provide compelling empirical evidence that the GBS procedure itself may possess favorable Oracle-like risk properties under sparsity.

The primary objective of this paper is to establish this property rigorously. Within the spike-and-slab Bayesian formulation and the asymptotic decision-theoretic framework of \citet{BCFG2011}, we study the asymptotic
Bayes optimality under sparsity of the Gavrilov--Benjamini--Sarkar step-down
procedure \citep{GBS2009} in the sparse Gaussian sequence model \eqref{eq:gaussian_sequence_model}. A central ingredient in existing ABOS analyses of the Benjamini--Hochberg procedure is the approximation of its random rejection threshold by a suitable deterministic surrogate, often arising from Bayesian false discovery rate considerations. See, for example, \citet{BCFG2011} and \citet{FrommletBogdan2013}, where this idea plays a central role. The step-down nature of the GBS procedure makes such a threshold-localization approach considerably less transparent. To overcome this difficulty, we develop a new proof framework based on a direct decomposition of the Bayes risk into its false-discovery and false-nondiscovery components. The analysis combines two key technical ingredients. The first is a new finite-sample inequality for shifted order statistics associated with the GBS critical constants, which provides explicit control of the Type-I error contribution. The second is a signal-crossing analysis for ordered alternative p-values that exploits the sequential step-down structure of the procedure and yields asymptotic control of the Type-II error contribution. Together, these ingredients allow us to establish the ABOS property of the GBS procedure over a broad class of sparse asymptotic regimes without resorting to threshold-localization arguments. In this process, we expand the class of multiple testing procedures known to possess the ABOS property. In particular, our results show that the GBS procedure joins a relatively small collection of practically implementable multiple testing procedures whose Bayes risks asymptotically match that of the Bayes Oracle under sparsity. More broadly, our work suggests that direct risk analysis may provide a viable alternative route for establishing asymptotic optimality properties of step-down multiple testing procedures.

The remainder of the paper is organized as follows. In Section~\ref{sec:Model}, we introduce the sparse Gaussian sequence model, the Bayes risk framework, the asymptotic assumptions, and the Gavrilov--Benjamini--Sarkar procedure. Section~\ref{sec:ABOS-GBS-Theory} develops the asymptotic Bayes risk analysis of the GBS procedure. In particular, Section~\ref{sec:GBS-TypeI} establishes control of the Type-I error component through a finite-sample inequality for shifted order statistics associated with the GBS critical constants, while Section~\ref{sec:GBS-TypeII} develops the Type-II error analysis based on a signal-crossing argument for ordered alternative $p$-values. The asymptotic Bayes optimality under sparsity of the GBS procedure is established in Section~\ref{sec:ABOS-GBS-Main-Result}. Concluding remarks are provided in Section~\ref{sec:Discussion}. Proofs of the main theoretical results are deferred to the Appendix.


\section{Model Formulation and Preliminaries}
\label{sec:Model}

\subsection{Sparse Gaussian Sequence Model}

In this article, we consider the problem of simultaneously testing the means of a collection of independent Gaussian random variables. Let \(X_1,\ldots,X_m\) be independent observations satisfying
\begin{equation}\label{eq:gaussian_sequence_model}
X_i \mid \mu_i \sim N(\mu_i,\sigma^2),
\qquad i=1,\ldots,m,
\end{equation}

where $\sigma^2>0$ is assumed known.

Our goal is to simultaneously test the hypotheses
\begin{equation}\label{eq:testing_problem}
H_{0i}:\mu_i=0
\qquad \text{versus} \qquad
H_{Ai}:\mu_i\neq 0,
\qquad i=1,\ldots,m.
\end{equation}

%

Following the asymptotic decision-theoretic framework of \citet{BCFG2011}, we assume that the unknown means \(\mu_1,\dots,\mu_m\) arise from the two-groups model
\begin{equation}
	\label{eq:two_groups_mu}
	\mu_i
	\stackrel{\mathrm{i.i.d.}}{\sim}
	(1-p_m)\delta_{\{0\}}
	+
	p_mN(0,\psi_m^2),
	\qquad i=1,\ldots,m,
\end{equation}
where $p_m\in(0,1)$ denotes the proportion of non-null effects and $\delta_{\{0\}}$ denotes the Delta measure degenerate at zero. Equivalently, the marginal distribution of $X_i$ is given by
\begin{equation}
	\label{eq:two_groups_x}
	X_i
	\stackrel{\mathrm{i.i.d.}}{\sim}
	(1-p_m)N(0,\sigma^2)
	+
	p_mN(0,\sigma^2+\psi_m^2),
	\qquad i=1,\ldots,m.
\end{equation}

Throughout the paper, we assume without loss of generality that $\sigma^2=1$. The parameter $p_m$ is allowed to depend on $m$ and is assumed to converge to zero as $m\to\infty$, reflecting the sparse nature of the underlying signal configuration. The variance parameter $\psi_m^2$ determines the strength of the non-null effects and is also allowed to vary with $m$.

Introducing latent indicators \(\nu_1,\ldots,\nu_m\), where
\[
\nu_i=
\begin{cases}
	0, & \text{if } H_{0i}\text{ is true},\\
	1, & \text{if } H_{1i}\text{ is true},
\end{cases}
\]
the above multiple testing problem \eqref{eq:testing_problem} may be equivalently formulated as
\begin{equation}\label{eq:testing_problem_latent}
	H_{0i}:\nu_i=0
	\qquad \text{versus} \qquad
	H_{1i}:\nu_i=1,
	\quad i=1,\ldots,m.
\end{equation}

We denote by
\begin{equation*}
	\mathcal H_0
	=
	\{1\leq i \leq m:\nu_i=0\}
	\qquad\text{and}\qquad
	\mathcal H_0^c
	=
	\{1\leq i \leq m:\nu_i=1\}
\end{equation*}

the sets of null and non-null hypotheses, respectively.

\subsection{Bayes Risk and the Bayes Oracle}

We evaluate multiple testing procedures under the additive loss framework of
\citet{BCFG2011}. Let $\delta_{0,m}>0$ and $\delta_{A,m}>0$ denote the losses
associated with a false discovery and a false nondiscovery, respectively. If
$V_m$ and $T_m$ denote the numbers of false discoveries and false
nondiscoveries produced by a multiple testing rule, then its Bayes risk is
defined as
\begin{equation}
	\label{eq:bayes_risk}
	R_m
	=
	\delta_{0,m}\mathbb E(V_m)
	+
	\delta_{A,m}\mathbb E(T_m),
\end{equation}
where the expectation is taken under the probability law induced by the
two-groups model.

Let
\[
f_m=\frac{1-p_m}{p_m}
\]
denote the sparsity parameter and define
\[
\delta_m=\frac{\delta_{0,m}}{\delta_{A,m}}
\]
to be the ratio between the losses associated with a false discovery and a
false nondiscovery. As shown by \citet{BGT2008} and
\citet{BCFG2011}, the Bayes Oracle, namely the multiple testing rule that
minimizes the Bayes risk \eqref{eq:bayes_risk}, rejects $H_{0i}$ whenever
\begin{equation}
	\label{eq:oracle_rule}
	\frac{f(X_i\mid\nu_i=1)}
	{f(X_i\mid\nu_i=0)}
	>
	\delta_mf_m.
\end{equation}

Under the Gaussian two-groups model \eqref{eq:two_groups_x} with
$\sigma^2=1$, straightforward calculations show that
\eqref{eq:oracle_rule} is equivalent to rejecting $H_{0i}$ whenever
\begin{equation}
	\label{eq:oracle_rule_equiv}
	X_i^2>c_m^2,
\end{equation}
where
\[
c_m^2
=
\left(1+\frac{1}{\psi_m^2}\right)
\left\{
\log(\psi_m^2\delta_m^2f_m^2)
+
\log\left(1+\frac{1}{\psi_m^2}\right)
\right\}.
\]

Following the asymptotic framework of \citet{BCFG2011}, we introduce
\[
u_m=\psi_m^2,
\qquad
v_m=u_mf_m^2\delta_m^2,
\]
so that the oracle threshold may be written as
\begin{equation}
	\label{eq:oracle_threshold}
c_m^2
=
\left(1+\frac{1}{u_m}\right)
\left\{
\log(v_m)
+
\log\left(1+\frac{1}{u_m}\right)
\right\}.
\end{equation}

Since the Bayes Oracle depends on the unknown sparsity parameter $p_m$, the
signal variance $\psi_m^2$, and the loss ratio $\delta_m$, it is unattainable
in practice. Nevertheless, its Bayes risk provides the natural benchmark
against which the asymptotic performance of practical multiple testing
procedures is assessed throughout this paper.

\subsection{Sparse Asymptotic Framework}
Throughout the paper, we work under the sparse asymptotic framework of \citet{BCFG2011}.

\begin{assumption}
	\label{ass:BCFG}
	The sequence \((p_m,\psi_m^2,\delta_m)\) satisfy satisfies
	\begin{itemize}
		\item[(i)] \(p_m\to0\) as \(m\to\infty\);
		\item[(ii)] \(mp_m\to\infty\) as \(m\to\infty\);
		\item[(iii)] \(u_m=\psi_m^2\to\infty\) as \(m\to\infty\);
		\item[(iv)] \(v_m=u_mf_m^2\delta_m^2\to\infty\) as \(m\to\infty\);
		\item[(v)] \(\dfrac{\log(v_m)}{u_m}\to C\), \(0<C<\infty\).
	\end{itemize}
\end{assumption}

Under Assumption \ref{ass:BCFG}, \citet{BCFG2011} showed that the Type-I and Type-II error probabilities of the Bayes Oracle satisfy
\begin{align}
	t_1^{BO}
	&=
	e^{-C/2}
	\sqrt{\frac{2}{\pi v_m\log(v_m)}}
	\,
	(1+o(1)),
	\label{eq:t1_bo}
	\\
	t_2^{BO}
	&=
	\bigl(2\Phi(\sqrt{C})-1\bigr)
	(1+o(1)),
	\label{eq:t2_bo}
\end{align}
and consequently the optimal Bayes risk is given by
\begin{equation}
	\label{eq:oracle_risk}
R_m^{\mathrm{BO}}
=
m\{(1-p_m)t_1^{\mathrm{BO}}\delta_{0,m}
+
p_m t_2^{\mathrm{BO}}\delta_{A,m}\}
=
mp_m\delta_{A,m}
\{2\Phi(\sqrt C)-1\}(1+o(1)).
\end{equation}

A multiple testing procedure is said to possess the asymptotic Bayes optimality under sparsity (ABOS) property if its Bayes risk is asymptotically equivalent to the Bayes Oracle risk, that is,
\[
\frac{R_m}{R_m^{BO}}
\rightarrow
1
\quad\text{as }m\to\infty.
\]

\subsection{The Gavrilov--Benjamini--Sarkar Procedure}

For each null hypothesis $H_{0i}:\mu_i=0$, let
\[
P_i
=
2\left\{
1-\Phi\!\left(\frac{|X_i|}{\sigma}\right)
\right\}=2\bar\Phi\!\left(\frac{|X_i|}{\sigma}\right),
\qquad i=1,\ldots,m,
\]
denote the corresponding two-sided Gaussian $p$-value, where $\Phi$ and $\bar\Phi$
denote the standard normal distribution function and the survival function, respectively.

Let
\[
P_{(1)}
\le
P_{(2)}
\le
\cdots
\le
P_{(m)}
\]
denote the ordered $p$-values corresponding to the $m$ individual tests. The Gavrilov--Benjamini--Sarkar (GBS) step-down testing procedure \citep{GBS2009} employs the critical constants
\begin{equation}
	\label{eq:gbs_constants}
	c_i
	=
	\frac{i\alpha_m}
	{m+1-i(1-\alpha_m)},
	\qquad
	i=1,\ldots,m,
\end{equation}
where $\alpha_m\in(0,1)$ denotes the nominal false discovery rate level.

The GBS procedure rejects all hypotheses corresponding to
\[
P_{(1)},\ldots,P_{(R_m^{GBS})},
\]
where
\begin{equation}
	\label{eq:gbs_rule}
	R_m^{GBS}
	=
	\max
	\left\{
	i:
	P_{(j)}
	\le
	c_j
	\ \text{for all}\
	j=1,\ldots,i
	\right\},
\end{equation}
with the convention that $R_m^{GBS}=0$ if the set above is empty.

Our goal is to determine whether the GBS procedure possesses the ABOS property under the sparse asymptotic framework described above.

\section{Asymptotic Bayes Risk Analysis of the GBS Procedure}
\label{sec:ABOS-GBS-Theory}

In this section, we investigate the asymptotic Bayes risk properties of the Gavrilov--Benjamini--Sarkar (GBS) step-down testing procedure \citep{GBS2009} under the sparse asymptotic framework introduced in Section~\ref{sec:Model}. Our primary objective is to determine whether the GBS procedure attains the asymptotic Bayes optimality under sparsity (ABOS) property relative to the Bayes Oracle associated with the two-groups model \eqref{eq:two_groups_x}. To this end, we analyze separately the two components of the Bayes risk corresponding to false discoveries and false nondiscoveries. The Type-I error analysis relies on a new finite-sample inequality for shifted order statistics associated with the GBS critical constants, while the Type-II error analysis is based on a signal-crossing argument that exploits the sequential step-down structure of the procedure. These developments culminate in our main theorem establishing the ABOS property of the GBS procedure under a broad class of sparse asymptotic regimes.

Throughout this section, we assume that Assumption
\ref{ass:BCFG} holds. In addition, we assume that the nominal
false discovery rate level sequence $\{\alpha_m\}$ satisfies
\[
\alpha_m \to 0,
\quad
\alpha_m f_m \to \infty, \quad \textrm{as} \quad m\to\infty.
\]

We further define
\[
r_{\alpha_m}
=
\frac{\alpha_m}{1-\alpha_m}.
\]

Our analysis is carried out under the following GBS calibration and compatibility conditions:
\begin{equation}
	\label{eq:gbs-calibration-condition}
	\frac{2\log(f_m/r_{\alpha_m})}{u_m}
	\to C,
	\qquad
	\frac{\log\{1/(r_{\alpha_m}\delta_m)\}}{u_m}
	\to 0,
	\qquad
	\text{as } m\to\infty,
\end{equation}
for the same constant \(0<C<\infty\) appearing in Assumption~\ref{ass:BCFG}. These calibration and compatibility conditions are the natural analogues of
the assumptions employed by \citet{BCFG2011} in their asymptotic Bayes optimality analysis of the Benjamini--Hochberg procedure. They ensure that
the GBS critical sequence is asymptotically aligned with the Bayes Oracle threshold under the sparse asymptotic framework.

\subsection{Control of the Type-I Error Component}
\label{sec:GBS-TypeI}
For $1\le i\le m$, let

\[
c_i
=
\frac{i\alpha_m}
{m+1-i(1-\alpha_m)}
\]

denote the GBS critical constants. The following finite-sample
inequality plays a central role in our analysis.

The first step in our analysis is to obtain a finite-sample bound for
the expected number of false discoveries produced by the GBS procedure.
The key ingredient is the following inequality for shifted order
statistics associated with the GBS critical constants.

\begin{lemma}
	\label{lem:gbs-shifted-null-bound}
	Let \(U_{(1)}\leq\cdots\leq U_{(n)}\) be the order statistics of \(n\)
	independent \(U(0,1)\) random variables. Let \(k\geq0\) and \(0<\alpha<1\).
	For \(\ell=1,\ldots,n\), define
	\[
	b_{\ell,k}^{(n)}(\alpha)
	=
	\frac{(k+\ell)\alpha}
	{n+k+1-(k+\ell)(1-\alpha)}.
	\]
	Then
	\[
	\sum_{j=1}^{n}
	P\left(
	U_{(\ell)}\leq b_{\ell,k}^{(n)}(\alpha),
	\ \ell=1,\ldots,j
	\right)
	\leq
	\frac{\alpha}{1-\alpha}(k+1).
	\]
\end{lemma}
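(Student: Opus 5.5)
The plan is to read the left-hand side as an expected number of rejections and then argue by induction on $n$, peeling off the smallest order statistic. Write $b_\ell=b^{(n)}_{\ell,k}(\alpha)$ and $r=\alpha/(1-\alpha)$. Let $N_n(k)$ be the number of rejections made by the step-down procedure with critical values $b_1\le\cdots\le b_n$ applied to $U_1,\dots,U_n$. The event $\{U_{(\ell)}\le b_\ell,\ \ell=1,\dots,j\}$ is exactly $\{N_n(k)\ge j\}$. Hence the sum in the lemma equals $\mathbb E\,N_n(k)$, and the claim is that $\mathbb E\,N_n(k)\le r(k+1)$ for all $n\ge1$ and $k\ge0$.

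Two algebraic identities drive the argument:
\[
\frac{b_\ell}{1-b_\ell}=\frac{(k+\ell)\alpha}{n+1-\ell},
\qquad
b^{(n)}_{\ell,k}(\alpha)=b^{(n-1)}_{\ell-1,k+1}(\alpha)\quad (2\le \ell\le n).
\]
The second identity says that removing the smallest $p$-value turns the problem $(n,k)$ into the problem $(n-1,k+1)$ with the same constants. This is the reason the shift $k$ appears in the lemma.

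For the recursion, condition on $U_{(1)}=u\le b_1$. The remaining $n-1$ points are i.i.d.\ uniform on $(u,1)$, and the rescaled variables $V=(U-u)/(1-u)$ are i.i.d.\ $U(0,1)$. The condition $U_{(\ell)}\le b_\ell$ becomes $V_{(\ell-1)}\le (b_\ell-u)/(1-u)=:\tilde b_{\ell-1}(u)$. This gives the exact recursion
\[
\mathbb E\,N_n(k)=\int_0^{b_1} n(1-u)^{n-1}\Bigl\{1+\mathbb E\,\tilde N_{n-1}(u)\Bigr\}\,du ,
\]
where $\tilde N_{n-1}(u)$ is the step-down count for $n-1$ uniforms with constants $\tilde b_{\ell-1}(u)$.

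The crude bound $\tilde b_{\ell-1}(u)\le b_\ell=b^{(n-1)}_{\ell-1,k+1}$ together with the induction hypothesis yields only
\[
\mathbb E\,N_n(k)\le\{1-(1-b_1)^n\}\{1+r(k+2)\}.
\]
This is not enough, because $1-(1-b_1)^n\approx 1-e^{-\alpha(k+1)}$ slightly exceeds the needed $\alpha(k+1)/\{1+\alpha(k+1)\}$. So the induction must keep the shift by $u$, and I expect this to be the main obstacle.

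My first attempt will be to strengthen the induction statement to thresholds of the affine form $(b_\ell-u)/(1-u)$. Concretely, I would prove a bound $\mathbb E\,N\le g_n(k,u)$, with $g_n(k,0)\le r(k+1)$ and $g$ decreasing in $u$ at a rate that offsets the factor $1-(1-b_1)^n$. The natural candidate comes from the base case $n=1$, where $\mathbb E N=b_1=r(k+1)/\{1+r(k+1)\}$ up to the $\alpha$-correction.

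If this becomes unwieldy, the fallback is the Gavrilov--Benjamini--Sarkar style argument. Define
\[
D_j=\{U_{(\ell)}\le b_\ell,\ \ell<j\}\cap\{U_{(j)}\le b_j\},
\]
and write $P(D_j)=P(D_{j-1})-P(D_{j-1},\,U_{(j)}>b_j)$. Then use the identity $b_j/(1-b_j)=(k+j)\alpha/(n+1-j)$ to show that the summands telescope with
\[
\sum_{j=1}^n P(D_j)\,\frac{n+1-j}{k+j}\le \frac{\alpha}{1-\alpha}\sum_{j=1}^n P(D_{j-1}\setminus D_j)\,(\text{bounded weights}),
\]
and hence $\le r(k+1)$. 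The key fact used there is that, conditional on $U_{(j-1)}$, the spacing $U_{(j)}-U_{(j-1)}$ has a Beta$(1,n-j+1)$ law after rescaling by $1-U_{(j-1)}$.
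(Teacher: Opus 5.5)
Your framework matches the paper's. The sum equals $\mathbb E N_n(k)$, and conditioning on $U_{(1)}=u$ gives exactly the recursion the paper starts from. Your diagnosis is also correct: the crude comparison $\tilde b_{\ell-1}(u)\le b^{(n-1)}_{\ell-1,k+1}(\alpha)$ loses too much, since $1-(1+A/n)^{-n}\ge A/(1+A)$ with $A=(k+1)\alpha$, strictly for $n\ge2$.

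But the proposal stops precisely at the step that carries the proof. You never say what $g_n(k,u)$ is or check that it propagates, and the base-case heuristic you suggest (something shaped like $b_1$) is not the form that works. The fallback does not fill the gap either. Its left-hand side carries weights $(n+1-j)/(k+j)$, which drop below $1$ once $j>(n+1-k)/2$, so even a bound on that weighted sum would not bound $\sum_j P(D_j)$. The ``bounded weights'' and the final ``hence $\le r(k+1)$'' are unsupported.

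The missing idea, which is what the paper uses, is that the threshold family is closed under the conditioning map once $k$ is allowed to be real and $\alpha$ is transformed as well. Put $x=u/(1-u)$, $\alpha_u=(\alpha+x)/(1+x)$ and $k_u=\{(k+1)\alpha-nx\}/(\alpha+x)$; note $k_u>0$ exactly when $u<b_1$. Comparing odds gives
\[
\frac{b^{(n)}_{\ell+1,k}(\alpha)-u}{1-u}=b^{(n-1)}_{\ell,k_u}(\alpha_u),\qquad \ell=1,\ldots,n-1.
\]
So the induction hypothesis, taken over all real $k\ge0$ and all $\alpha\in(0,1)$, yields
\[
1+\mathbb E\,\tilde N_{n-1}(u)\le 1+\frac{\alpha_u}{1-\alpha_u}(k_u+1)=\frac{1+(k+1)\alpha-(n-1)x}{1-\alpha}.
\]
This is your ``$g$ decreasing in $u$'', and it is linear in the odds $x$ rather than modeled on $b_1$.

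The induction then closes through an exact identity. Since $\frac{d}{du}\{nu(1-u)^{n-1}\}=n(1-u)^{n-1}-n(n-1)u(1-u)^{n-2}$ and $b_1=A/(n+A)$,
\[
\int_0^{b_1}n(1-u)^{n-1}\Bigl\{1+A-(n-1)\frac{u}{1-u}\Bigr\}\,du=A,
\]
so $\mathbb E N_n(k)\le A/(1-\alpha)=r(k+1)$.

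Your own three-parameter route, keeping the shift $u$ explicit instead of absorbing it into $(k,\alpha)$, can also be completed. It works with $g_n(k,u)=\{(k+1)\alpha-n\,u/(1-u)\}/(1-\alpha)$, because two successive shifts compose into a single shift and the analogous integral again evaluates exactly. Stating this $g$ and verifying that identity is exactly the part the proposal leaves open.
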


\begin{proof}
	See Appendix.
\end{proof}

The significance of Lemma~\ref{lem:gbs-shifted-null-bound} lies in the fact that
it provides a finite-sample bound on the probability that a null
$p$-value occupies one of the rejection positions of the GBS procedure.
Unlike the Benjamini--Hochberg procedure, whose asymptotic analysis is
typically carried out through threshold approximations, the step-down
nature of the GBS procedure makes such an approach considerably less
transparent. Lemma~\ref{lem:gbs-shifted-null-bound} circumvents this
difficulty by directly exploiting the structure of the GBS critical
constants and thereby yields explicit control of the false-discovery component of the Bayes risk.

Let
\[
K_m=\sum_{i=1}^m \nu_i = |\mathcal H_0^c|
\]
denote the number of non-null hypotheses. Under the assumed two-groups model, \(K_m\sim \mathrm{Bin}(m,p_m)\).

\begin{corollary}
	\label{cor:type1-bound}
	Let \(V_m^{\mathrm{GBS}}\) denote the number of false discoveries made by the
	GBS procedure. Conditional on \(K_m=k\), suppose there are \(k\) non-null
	\(p\)-values and \(m-k\) null \(p\)-values. Then
	\[
	\mathbb{E}\left(V_m^{\mathrm{GBS}}\mid K_m=k\right)
	\le
	\frac{\alpha_m}{1-\alpha_m}(k+1).
	\]
	Consequently,
	\[
	\mathbb{E}\left(V_m^{\mathrm{GBS}}\right)
	\le
	\frac{\alpha_m}{1-\alpha_m}(mp_m+1).
	\]
	In particular, if \(m p_m\to\infty\) and \(\alpha_m\to0\), then
	\[
	\mathbb{E}\left(V_m^{\mathrm{GBS}}\right)
	=
	O(\alpha_m m p_m).
	\]
\end{corollary}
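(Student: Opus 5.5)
We prove the corollary by reducing it to Lemma~\ref{lem:gbs-shifted-null-bound}, conditioning on $K_m=k$ and on the non-null $p$-values. Fix $k$, and let $n=m-k$ be the number of null $p$-values. Conditionally on the non-null data, these are i.i.d.\ $U(0,1)$. Write $U_{(1)}\le\cdots\le U_{(n)}$ for their order statistics.

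\textbf{Step 1: a null rejection forces a shifted-threshold event.} Suppose the GBS procedure rejects at least $j$ nulls. The $j$-th smallest null $p$-value $U_{(j)}$ then sits at some overall rank $r\le k+j$, since at most $k$ non-nulls can precede it. By the step-down rule \eqref{eq:gbs_rule}, $U_{(j)}\le c_r$. Because $c_i$ is increasing in $i$, this gives $U_{(j)}\le c_{k+j}$. The same argument applies to every $\ell\le j$, so
\[
\{V_m^{\mathrm{GBS}}\ge j\}\subseteq\{U_{(\ell)}\le c_{k+\ell},\ \ell=1,\ldots,j\}.
\]

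\textbf{Step 2: identify the thresholds.} With $m=n+k$,
\[
c_{k+\ell}=\frac{(k+\ell)\alpha_m}{n+k+1-(k+\ell)(1-\alpha_m)}=b^{(n)}_{\ell,k}(\alpha_m),
\]
which is exactly the shifted constant of Lemma~\ref{lem:gbs-shifted-null-bound}. The only detail to check is that $c_i$ is increasing: the numerator increases in $i$ and the denominator decreases, staying positive since $m+1-m(1-\alpha_m)>0$.

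\textbf{Step 3: sum the tail probabilities.} Using $\mathbb E(V\mid\cdot)=\sum_{j\ge1}P(V\ge j\mid\cdot)$ and Steps 1--2,
\[
\mathbb E\bigl(V_m^{\mathrm{GBS}}\mid \text{non-null data},K_m=k\bigr)\le\sum_{j=1}^n P\bigl(U_{(\ell)}\le b^{(n)}_{\ell,k}(\alpha_m),\ \ell\le j\bigr).
\]
By the lemma, the right-hand side is at most $\frac{\alpha_m}{1-\alpha_m}(k+1)$. This bound no longer depends on the non-null values, so integrating them out gives the first claim.

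\textbf{Step 4: remove the conditioning.} Taking expectation over $K_m\sim\mathrm{Bin}(m,p_m)$ and using $\mathbb E K_m=mp_m$ gives
\[
\mathbb E\bigl(V_m^{\mathrm{GBS}}\bigr)\le\frac{\alpha_m}{1-\alpha_m}(mp_m+1).
\]
When $mp_m\to\infty$ and $\alpha_m\to0$, we have $1-\alpha_m\to1$ and $mp_m+1\sim mp_m$, so $\mathbb E(V_m^{\mathrm{GBS}})=O(\alpha_m mp_m)$.

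The main obstacle is Step 1, the rank-displacement argument. It must correctly handle ties and the fact that the step-down event involves all earlier ranks, including non-null ones. We keep only the necessary conditions on null positions, which is valid because dropping constraints enlarges the event.
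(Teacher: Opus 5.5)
Your proposal is correct and follows essentially the same route as the paper. Both use the rank-displacement inclusion $\{V_m^{\mathrm{GBS}}\ge j\}\subseteq\{U_{(\ell)}\le c_{k+\ell},\ \ell=1,\ldots,j\}$, the identification $c_{k+\ell}=b^{(n)}_{\ell,k}(\alpha_m)$, the tail-sum formula combined with Lemma~\ref{lem:gbs-shifted-null-bound}, and averaging over $K_m\sim\mathrm{Bin}(m,p_m)$; your extra conditioning on the non-null data and your explicit check that $c_i$ is increasing are harmless refinements of steps the paper uses implicitly.
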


\begin{proof}
	See Appendix.
\end{proof}

Corollary~\ref{cor:type1-bound} provides the desired upper bound on the
expected number of false discoveries made by the GBS procedure. Its implication
for the Bayes risk follows after multiplying by the loss assigned to a false
discovery. Specifically,
\[
\mathbb{E}\!\left(V_m^{\mathrm{GBS}}\right)
=
O(\alpha_m m p_m),
\]
and hence the false-discovery component of the Bayes risk satisfies
\[
\delta_{0,m}
\mathbb{E}\!\left(V_m^{\mathrm{GBS}}\right)
=
O(\delta_{0,m}\alpha_m m p_m).
\]
Writing
\[
\delta_m=\frac{\delta_{0,m}}{\delta_{A,m}},
\]
this may equivalently be expressed as
\[
\delta_{0,m}
\mathbb{E}\!\left(V_m^{\mathrm{GBS}}\right)
=
O(\alpha_m\delta_m\, m p_m\delta_{A,m}).
\]
Therefore, under the calibration condition
\[
\alpha_m\delta_m\to0 \quad \textrm{as} \quad m\to\infty,
\]
the false-discovery contribution is negligible relative to the leading
Oracle-risk scale \(m p_m\delta_{A,m}\). Consequently, the asymptotic Bayes risk
of the GBS procedure is determined primarily by its false-nondiscovery component.
The remainder of the analysis is devoted to obtaining a sharp asymptotic upper
bound for the number of missed signals produced by the GBS procedure.

\subsection{Control of the Type-II Error Component}
\label{sec:GBS-TypeII}

We now turn to the false nondiscovery component of the Bayes risk. The analysis proceeds in three stages. We first establish a deterministic signal-crossing property for the alternative $p$-value distribution. We then show that this crossing is initiated sufficiently early. Finally, we translate these deterministic properties into an empirical crossing result for the ordered signal $p$-values, which yields the desired control of the Type-II component of the Bayes risk. 

Let
\[
G_m(t)
=
\mathbb{P}(P_i\le t\mid i\in\mathcal H_0^c),
\qquad 0<t<1,
\]
denote the distribution function of a \(p\)-value generated under the alternative.
The following lemma provides the deterministic signal-crossing property needed
for the empirical analysis of the ordered signal \(p\)-values.

\begin{lemma}[Deterministic signal-crossing bound]
	\label{lem:deterministic-signal-crossing}
	
	Assume that Assumption~\ref{ass:BCFG} holds. Furthermore, assume that the GBS calibration and compatibility conditions
	\eqref{eq:gbs-calibration-condition} hold, and let
	\[
	q_*
	=
	2\bar\Phi(\sqrt C).
	\]
	
	Then, for every \(0<\varepsilon<q_*\), there exists
	\(\eta_\varepsilon>0\) such that, for all sufficiently large \(m\),
	\[
	G_m(q\alpha_m p_m)
	\ge
	(1+\eta_\varepsilon)q
	\]
	uniformly over
	\[
	0<q\le q_*-\varepsilon.
	\]
\end{lemma}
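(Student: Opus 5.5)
The plan is to get an explicit formula for \(G_m\), prove the bound at the single right endpoint \(q_0=q_*-\varepsilon\), and then push it down to all smaller \(q\) using concavity of \(G_m\). This avoids any direct uniform analysis as \(q\to0\), where \(\log(1/q)\) is unbounded and the Gaussian approximations stop being uniform.

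\emph{Step 1 (explicit form and concavity).} Under the alternative, \(X_i\sim N(0,1+u_m)\). Write \(z_t=\bar\Phi^{-1}(t/2)\). Since \(P_i\le t\) if and only if \(|X_i|\ge z_t\),
\[
G_m(t)=2\bar\Phi\!\left(\frac{z_t}{\sqrt{1+u_m}}\right),\qquad 0<t<1 .
\]
Differentiating in \(t\) gives the alternative \(p\)-value density
\[
g_m(t)=\frac{\phi\bigl(z_t/\sqrt{1+u_m}\bigr)}{\sqrt{1+u_m}\,\phi(z_t)}
=\frac{1}{\sqrt{1+u_m}}\exp\!\left\{\frac{z_t^2}{2}\cdot\frac{u_m}{1+u_m}\right\}.
\]
This is increasing in \(z_t\), hence nonincreasing in \(t\). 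So \(G_m\) is concave on \((0,1)\) with \(G_m(0+)=0\), and therefore \(t\mapsto G_m(t)/t\) is nonincreasing. Consequently, for \(0<q\le q_0\),
\[
\frac{G_m(q\alpha_mp_m)}{q}\ \ge\ \frac{G_m(q_0\alpha_mp_m)}{q_0}.
\]
The whole lemma thus reduces to showing \(G_m(q_0\alpha_mp_m)\ge(1+\eta_\varepsilon)q_0\) for all large \(m\).

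\emph{Step 2 (the endpoint).} Set \(t_0=q_0\alpha_mp_m\to0\). The standard Gaussian tail inversion gives
\[
z_{t_0}^2=2\log(1/t_0)-\log\log(1/t_0)+O(1).
\]
We have \(\log(1/t_0)=\log(1/\alpha_m)+\log(1/p_m)+O(1)\). Since \(f_m=(1-p_m)/p_m\) and \(r_{\alpha_m}=\alpha_m/(1-\alpha_m)\), with \(p_m,\alpha_m\to0\), this equals \(\log(f_m/r_{\alpha_m})+o(1)\). The \(\log\log\) term is \(O(\log u_m)=o(u_m)\). By the calibration condition \eqref{eq:gbs-calibration-condition} and \(u_m\to\infty\), it follows that \(z_{t_0}^2/(1+u_m)\to C\). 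Continuity of \(\bar\Phi\) then gives \(G_m(t_0)\to2\bar\Phi(\sqrt C)=q_*\).

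Choose \(\eta_\varepsilon>0\) with \((1+2\eta_\varepsilon)q_0<q_*\); this is possible since \(q_0=q_*-\varepsilon<q_*\). Then for all large \(m\), \(G_m(t_0)\ge(1+\eta_\varepsilon)q_0\), and Step 1 gives \(G_m(q\alpha_mp_m)\ge(1+\eta_\varepsilon)q\) uniformly in \(0<q\le q_0\). Only the first part of \eqref{eq:gbs-calibration-condition} and Assumption~\ref{ass:BCFG}(iii) appear to be needed here.

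\emph{Main obstacle.} The delicate point is the uniformity over arbitrarily small \(q\). A direct asymptotic expansion of \(G_m(q\alpha_mp_m)\) fails as \(q\to0\), because \(2\log(1/q)\) is not \(o(u_m)\) uniformly in \(q\). The monotone-likelihood-ratio/concavity observation in Step 1 is what removes this difficulty, so I would verify that step most carefully. The remaining care is in controlling the \(O(1)\) and \(\log\log\) corrections in \(z_{t_0}^2\) relative to \(u_m\), which is routine.
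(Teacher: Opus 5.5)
Your proposal is correct and follows the paper's proof: both reduce the uniform claim to the single endpoint $q_0=q_*-\varepsilon$ via monotonicity of $t\mapsto G_m(t)/t$, and then use the first calibration condition, together with $\log(f_m/r_{\alpha_m})=\log\{1/(\alpha_mp_m)\}+o(1)$, to obtain $G_m(q_0\alpha_mp_m)\to q_*$. The only difference is in how the monotonicity is obtained: you derive it from concavity of $G_m$, since its density $\phi(z_t/s_m)/\{s_m\phi(z_t)\}$ with $s_m=\sqrt{1+u_m}$ is nonincreasing in $t$, whereas the paper shows $\bar\Phi(z/s_m)/\bar\Phi(z)$ is increasing in $z$ using the increasing normal hazard rate; both arguments are valid.
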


\begin{proof}
	See Appendix.
\end{proof}

Lemma~\ref{lem:deterministic-signal-crossing} shows that, below the
limiting crossing point \(q_*\), the signal \(p\)-value distribution
dominates the identity map by a fixed multiplicative margin. More
precisely, for every \(\varepsilon\in(0,q_*)\), the distribution
function \(G_m\) eventually satisfies
\[
G_m(q\alpha_m p_m)\ge (1+\eta_\varepsilon)q
\]
uniformly over
\[
0<q\le q_*-\varepsilon.
\]
Consequently, throughout this range, the expected number of signal
\(p\)-values falling below a threshold of order \(q\alpha_m p_m\)
exceeds the corresponding normalized rank by a fixed multiplicative
margin. This deterministic separation is the key mechanism behind the
Type-II analysis.

To translate this deterministic phenomenon into an empirical statement
about the ordered signal \(p\)-values, it remains to verify that the
crossing mechanism is initiated sufficiently early. The following lemma
establishes this initial-crossing property.

\begin{lemma}[Initial crossing condition]
	\label{lem:initial-crossing}
	
	Assume that Assumption~\ref{ass:BCFG} holds. Furthermore, assume that the GBS calibration and compatibility conditions
	\eqref{eq:gbs-calibration-condition} hold.
	
	Then
	\[
	mp_mG_m(c_1)\to\infty
	\qquad\text{as } m\to\infty.
	\]	
\end{lemma}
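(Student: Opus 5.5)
We will prove that \(mp_m G_m(c_1)\to\infty\) by computing \(G_m(c_1)\) explicitly. Under the alternative, \(X_i\sim N(0,1+u_m)\). Hence, for \(0<t<1\),
\[
G_m(t)=\mathbb P\bigl(|X_i|\ge z_t\bigr)=2\bar\Phi\!\left(\frac{z_t}{\sqrt{1+u_m}}\right),
\qquad z_t=\bar\Phi^{-1}(t/2).
\]
With \(c_1=\alpha_m/(m+1-(1-\alpha_m))=\alpha_m/(m-1+\alpha_m)\sim \alpha_m/m\), the task reduces to showing that \(mp_m\cdot 2\bar\Phi(z_{c_1}/\sqrt{1+u_m})\to\infty\). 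Since \(c_1\to0\), the standard Gaussian tail inversion gives
\[
z_{c_1}^2=2\log(2/c_1)-\log\log(2/c_1)+O(1),
\]
so \(z_{c_1}^2\le 2\log(m/\alpha_m)+O(1)\).

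The heart of the argument is comparing \(z_{c_1}^2/u_m\) with \(C\). Write \(m/\alpha_m=(mp_m)\cdot(f_m/\alpha_m)\cdot(1-p_m)^{-1}\). By the calibration condition \eqref{eq:gbs-calibration-condition}, \(2\log(f_m/r_{\alpha_m})/u_m\to C\), and \(r_{\alpha_m}\sim\alpha_m\). Therefore
\[
\frac{z_{c_1}^2}{1+u_m}\le C+o(1)+\frac{2\log(mp_m)}{u_m}.
\]
This bound has a genuine problem. The term \(\log(mp_m)/u_m\) need not vanish; the assumptions only give \(mp_m\to\infty\). If we use the crude bound \(\bar\Phi(x)\ge \phi(x)x/(1+x^2)\), the result is
\[
mp_m G_m(c_1)\gtrsim \frac{mp_m\,\exp\{-\tfrac12(C+2\log(mp_m)/u_m)u_m\}}{\sqrt{u_m}}\cdot\sqrt{u_m}\ \text{(roughly)},
\]
and this expression is not obviously growing. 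So the estimate must be done more carefully.

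The refined argument works directly on the exponent. We have
\[
\log\{mp_m G_m(c_1)\}=\log(mp_m)-\frac{z_{c_1}^2}{2(1+u_m)}-\tfrac12\log\frac{z_{c_1}^2}{1+u_m}+O(1).
\]
Here \(z_{c_1}^2/(2(1+u_m))\le \log(m/\alpha_m)/u_m+O(1/u_m)\). The \(\log(mp_m)\) contribution inside it is therefore \(\log(mp_m)/u_m=o(\log(mp_m))\), because \(u_m\to\infty\). The remaining piece is \(\log(f_m/\alpha_m)/u_m\to C/2\), which is bounded. The correction term \(\tfrac12\log(z^2/(1+u_m))\) is \(O(1)\) provided \(z_{c_1}^2/(1+u_m)\) stays bounded away from 0 and \(\infty\). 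Boundedness above follows from the estimate just given together with \(\log(mp_m)/u_m\le o(\log mp_m)\). If \(z_{c_1}^2/(1+u_m)\) instead tends to 0, then \(G_m(c_1)\to1\), which only helps. Combining these,
\[
\log\{mp_m G_m(c_1)\}\ge (1-o(1))\log(mp_m)-C/2-O(1)\to\infty,
\]
which proves the lemma.

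The main obstacle is the case where \(\log(mp_m)\) is comparable to or larger than \(u_m\). In that regime the ratio \(z_{c_1}^2/u_m\) may diverge, and the Mills-ratio correction \(-\tfrac12\log(z^2/u_m)\) must be checked against \((1-1/u_m)\log(mp_m)\). To handle it, I would bound \(z^2/u_m\le C'+2\log(mp_m)/u_m\). The correction is then at most \(O(\log\log(mp_m))\), which is still dominated by \(\log(mp_m)(1-1/u_m)\). This gives the conclusion uniformly across regimes.
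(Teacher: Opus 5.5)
Your argument is correct and is essentially the paper's proof. The paper likewise splits $\log(m/\alpha_m)=\log\{1/(\alpha_mp_m)\}+\log(mp_m)$ and uses the calibration condition to get $u_m\sim(2/C)\log\{1/(\alpha_mp_m)\}$; it then observes that the extra term $\log(mp_m)/u_m$ in $y_m^2/2$ is $o(\log(mp_m))$ because $u_m\to\infty$, and finishes with the Mills lower bound $\bar\Phi(y)\ge C_0e^{-y^2/2}/(1+y)$.

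The ``genuine problem'' you flag midway is only an algebra slip: you multiplied the exponent by $u_m$. With $y_m^2\le C+o(1)+2\log(mp_m)/u_m$, even the crude Mills bound gives $mp_mG_m(c_1)\gtrsim(mp_m)^{1-1/u_m}/\sqrt{\log(mp_m)}\to\infty$, and the case $y_m<1$ is trivial. Also, $c_1=\alpha_m/(m+\alpha_m)$, not $\alpha_m/(m-1+\alpha_m)$.
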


\begin{proof}
	See Appendix.
\end{proof}

Lemma~\ref{lem:initial-crossing} ensures that the expected number of
signal \(p\)-values falling below the first GBS critical constant tends
to infinity. Consequently, the crossing phenomenon described in
Lemma~\ref{lem:deterministic-signal-crossing} is not merely an asymptotic
property of the bulk ranks but is already initiated at the earliest stages
of the step-down procedure.

Together, Lemmas~\ref{lem:deterministic-signal-crossing} and
\ref{lem:initial-crossing} establish that the signal \(p\)-value
distribution exhibits a persistent excess over the identity map below the
limiting crossing point \(q_*\), and that this excess is present from the
very beginning of the ranking sequence. It remains to translate these
deterministic properties into a statement about the empirical behavior of
the ordered signal \(p\)-values. The following lemma shows that, with
probability tending to one, the ordered signal \(p\)-values cross the
corresponding GBS critical constants throughout a substantial range of
ranks. This empirical crossing property yields the desired control of the
false-nondiscovery component of the Bayes risk.

\begin{lemma}[Empirical signal-crossing bound]
	\label{lem:empirical-signal-crossing}
	
	Assume that Assumption~\ref{ass:BCFG} holds. Furthermore, assume that the GBS calibration and compatibility conditions
\eqref{eq:gbs-calibration-condition} hold.

	Let
	\[
	q_* = 2\bar\Phi(\sqrt C),
	\]
	and fix \(\varepsilon\in(0,q_*)\). Define
	\[
	r_m=\lfloor(q_*-\varepsilon)mp_m\rfloor.
	\]
	
	Then
	\[
	\mathbb P\!\left(R_m^{\mathrm{GBS}}\ge r_m\right)\to1
	\quad\textrm{as}\quad m\to\infty.
	\]
	
	Consequently,
	\[
	\mathbb E\!\left(T_m^{\mathrm{GBS}}\right)
	\le
	mp_m(1-q_*+\varepsilon)+o(mp_m).
	\]
\end{lemma}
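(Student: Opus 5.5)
We prove $\mathbb P(R_m^{\mathrm{GBS}}\ge r_m)\to1$ by a sufficient event that involves only the signal $p$-values, and then read off the bound on $\mathbb E(T_m^{\mathrm{GBS}})$.

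\textbf{Reduction to signal $p$-values.} Let $Q_{(1)}\le\cdots\le Q_{(K_m)}$ be the ordered signal $p$-values. For every $j$, the $j$-th smallest of all $p$-values is at most $Q_{(j)}$, so $P_{(j)}\le Q_{(j)}$. Hence the event
\[
E_m=\bigl\{Q_{(j)}\le c_j\ \text{for all } j=1,\ldots,r_m\bigr\}
\]
implies $R_m^{\mathrm{GBS}}\ge r_m$, by the step-down rule \eqref{eq:gbs_rule}. In turn, $Q_{(j)}\le c_j$ holds as soon as $N_m(c_j)\ge j$, where
\[
N_m(t)=\#\{i\in\mathcal H_0^c:P_i\le t\},
\]
and, conditionally on $K_m$, $N_m(t)\sim\mathrm{Bin}(K_m,G_m(t))$. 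Since $K_m/(mp_m)\to1$ in probability (as $mp_m\to\infty$), we work on $\{|K_m-mp_m|\le\epsilon_m mp_m\}$ for some $\epsilon_m\to0$ slowly.

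\textbf{Bulk ranks via Lemma~\ref{lem:deterministic-signal-crossing}.} For $j\le r_m$, write $j=q\,mp_m$ with $q\le q_*-\varepsilon$. Since $j\le mp_m=o(m)$ and $\alpha_m\to0$,
\[
c_j=\frac{j\alpha_m}{m+1-j(1-\alpha_m)}=q\alpha_m p_m(1+o(1))
\]
uniformly in $j\le r_m$. To absorb the $(1+o(1))$ factor, I would apply the lemma with a slightly smaller $\varepsilon$ and use monotonicity of $G_m$. This gives
\[
K_mG_m(c_j)\ge(1+\eta)(1-\epsilon_m)\,j\ge(1+\eta/2)\,j .
\]
Union bounding over all $j\le r_m$ needs care, so I would split the ranks.

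\emph{Ranks $j\ge M$.} Pick a grid of ranks $j_0<j_1<\cdots$ with $j_{l+1}\le(1+\eta/4)j_l$. Because $N_m$ is monotone and the $c_j$ are increasing, $N_m(c_{j_l})\ge j_{l+1}$ for every $l$ implies $N_m(c_j)\ge j$ for all $j$ in $[j_l,j_{l+1}]$. There are $O(\log(mp_m))$ grid points, and Chernoff gives each one failure probability at most $\exp(-\kappa j_l)$ for some $\kappa=\kappa(\eta)>0$. Since the $j_l$ grow geometrically, these bounds sum to $O(e^{-\kappa M})$, which is small for large $M$.

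\emph{Ranks $j<M$.} These small ranks are the main obstacle, because the deterministic excess $(1+\eta)$ gives no concentration when $j$ is bounded. Here Lemma~\ref{lem:initial-crossing} is used: $mp_mG_m(c_1)\to\infty$. Since $c_j\ge c_1$, for any fixed $M$ we get $N_m(c_1)\ge M$ with probability tending to one, as $N_m(c_1)$ is binomial with mean tending to infinity. On that event $N_m(c_j)\ge N_m(c_1)\ge M>j$ for every $j<M$. Letting $m\to\infty$ and then $M\to\infty$ yields $\mathbb P(E_m)\to1$.

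\textbf{Type-II bound.} On $\{R_m^{\mathrm{GBS}}\ge r_m\}\cap E_m$, the $r_m$ smallest signal $p$-values are all rejected. Therefore
\[
T_m^{\mathrm{GBS}}\le (K_m-r_m)_+ \quad\text{on this event, and}\quad T_m^{\mathrm{GBS}}\le K_m \quad\text{always.}
\]
Taking expectations,
\[
\mathbb E\bigl(T_m^{\mathrm{GBS}}\bigr)\le mp_m-r_m+\mathbb E\bigl(K_m\mathbf 1_{E_m^c}\bigr).
\]
The last term is $o(mp_m)$ by Cauchy--Schwarz, using $\mathbb E K_m^2=O((mp_m)^2)$ and $\mathbb P(E_m^c)\to0$. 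Since $r_m\ge(q_*-\varepsilon)mp_m-1$, this gives $\mathbb E\bigl(T_m^{\mathrm{GBS}}\bigr)\le mp_m(1-q_*+\varepsilon)+o(mp_m)$.
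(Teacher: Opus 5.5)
Your proposal is correct. Its core matches the paper's proof: the same reduction to $N_1(c_j)\ge j$ for the ordered signal $p$-values, Lemma~\ref{lem:initial-crossing} to handle the first ranks through $N_1(c_1)$, and Lemma~\ref{lem:deterministic-signal-crossing} plus a conditional Chernoff bound for the bulk ranks. Taking $M$ fixed and then letting $M\to\infty$ is equivalent to the paper's slowly growing $L_m$. The geometric grid is unnecessary: the per-rank bounds $\exp(-a_\varepsilon j)$ are already summable, so a plain union bound over $j\ge M$ costs only $O(e^{-a_\varepsilon M})$.

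The one real difference is the Type-II step. The paper writes $S_m^{\mathrm{GBS}}=R_m^{\mathrm{GBS}}-V_m^{\mathrm{GBS}}$ and needs Corollary~\ref{cor:type1-bound} (hence $\alpha_m\to0$) to get $\mathbb E(V_m^{\mathrm{GBS}})=o(mp_m)$. You instead argue that on $E_m$ all of $Q_{(1)},\dots,Q_{(r_m)}$ are rejected, which makes the Type-II bound independent of the Type-I analysis. That claim does not follow from $R_m^{\mathrm{GBS}}\ge r_m$ alone, so you should supply the interleaving argument:
a signal at overall rank $i$ is some $Q_{(j)}$ with $j\le\min(i,r_m)$, so $Q_{(j)}\le c_j\le c_i$; a null at rank $i$ below the rank of $Q_{(r_m)}$, preceded by $s\le i-1$ signals, satisfies $P_{(i)}<Q_{(s+1)}\le c_{s+1}\le c_i$. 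Hence the step-down passes the rank of $Q_{(r_m)}$.

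Your expectation display also needs a small correction. The bound $T_m^{\mathrm{GBS}}\le (K_m-r_m)\mathbf 1_{E_m}+K_m\mathbf 1_{E_m^c}$ gives $\mathbb E(T_m^{\mathrm{GBS}})\le mp_m-r_m+r_m\mathbb P(E_m^c)$. The version with $\mathbb E(K_m\mathbf 1_{E_m^c})$ does not follow from your pointwise bounds when $K_m<r_m$. The corrected form also makes the Cauchy--Schwarz step unnecessary.
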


\begin{proof}
	See Appendix.
\end{proof}

Lemma~\ref{lem:empirical-signal-crossing} implies that, with probability
tending to one, the GBS procedure rejects at least
\[
r_m=\lfloor (q_*-\varepsilon)m p_m\rfloor
\]
hypotheses. Consequently, after accounting for the (asymptotically negligible)
false discoveries established in Corollary~\ref{cor:type1-bound}, the number of
missed signals is asymptotically bounded above by
\[
K_m-r_m
\]
up to lower-order terms. Since \(K_m/(m p_m)\to1\) as \(m\to\infty\), it follows
from Lemma~\ref{lem:empirical-signal-crossing} that
\[
\mathbb{E}(T_m^{\mathrm{GBS}})
\le
m p_m(1-q_*+\varepsilon)+o(m p_m).
\]

Recalling that
\[
q_*=2\bar\Phi(\sqrt C),
\]
we obtain
\[
\mathbb{E}(T_m^{\mathrm{GBS}})
\le
m p_m\bigl(2\Phi(\sqrt C)-1+\varepsilon\bigr)
+o(m p_m).
\]
Since \(\varepsilon \in (0,q_*)\) is arbitrary,
\[
\mathbb{E}(T_m^{\mathrm{GBS}})
\le
m p_m\bigl(2\Phi(\sqrt C)-1\bigr)
+o(m p_m).
\]
Multiplying by the false-nondiscovery loss \(\delta_{A,m}\), we get
\[
\delta_{A,m}\mathbb{E}(T_m^{\mathrm{GBS}})
\le
m p_m\delta_{A,m}
\bigl(2\Phi(\sqrt C)-1\bigr)
+o(m p_m\delta_{A,m}).
\]

On the other hand, under Assumption~\ref{ass:BCFG}, the Bayes Oracle satisfies
\[
R_m^{\mathrm{BO}}
=
m p_m\delta_{A,m}
\bigl(2\Phi(\sqrt C)-1\bigr)(1+o(1)).
\]
Therefore,
\[
\delta_{A,m}E(T_m^{GBS})
\le
R_m^{BO}
+
o(mp_m\delta_{A,m}).
\]

Since
\[
\delta_{0,m}E(V_m^{GBS})
=
o(mp_m\delta_{A,m}),
\]
it follows that
\[
R_m^{GBS}
\le
R_m^{BO}
+
o(mp_m\delta_{A,m}).
\]

The following theorem shows that this upper bound is in fact asymptotically sharp.

\subsection{Main Theoretical Result}
\label{sec:ABOS-GBS-Main-Result}

We are now in a position to establish the asymptotic Bayes optimality
under sparsity of the Gavrilov--Benjamini--Sarkar step-down procedure \citep{GBS2009}.

\begin{theorem}[ABOS of the GBS procedure]
	\label{thm:gbs-abos}
	Assume that Assumption~\ref{ass:BCFG} holds. Furthermore, suppose that
	\[
	\alpha_m\to0,
	\qquad
	\alpha_m\delta_m\to0,
	\qquad
	\alpha_m f_m\to\infty,
	\qquad \text{as } m\to\infty,
	\]
	and that the GBS calibration and compatibility conditions
	\eqref{eq:gbs-calibration-condition} hold. Then
	\[
	R_m^{\mathrm{GBS}}
	=
	R_m^{\mathrm{BO}}(1+o(1)).
	\]
	Equivalently,
	\[
	\frac{R_m^{\mathrm{GBS}}}{R_m^{\mathrm{BO}}}
	\to 1
	\qquad \text{as } m\to\infty.
	\]
Consequently, the Gavrilov–Benjamini–Sarkar step-down procedure possesses the asymptotic Bayes optimality under sparsity (ABOS) property.
\end{theorem}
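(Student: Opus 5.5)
The proof is a sandwich argument. The upper bound $R_m^{\mathrm{GBS}}\le R_m^{\mathrm{BO}}+o(mp_m\delta_{A,m})$ has essentially been assembled in the discussion preceding the theorem. The lower bound is free, because the Bayes Oracle minimizes the Bayes risk among all multiple testing rules, and the GBS procedure is one such rule (a measurable function of $X_1,\ldots,X_m$). Dividing by $R_m^{\mathrm{BO}}$ and using \eqref{eq:oracle_risk} with $2\Phi(\sqrt C)-1>0$ (since $C>0$) then gives the ratio limit.

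\textbf{Step 1 (Type-I component).} Corollary~\ref{cor:type1-bound} gives
\[
\delta_{0,m}\mathbb E(V_m^{\mathrm{GBS}})\le \delta_{0,m}\frac{\alpha_m}{1-\alpha_m}(mp_m+1).
\]
Since $mp_m\to\infty$ and $\alpha_m\to0$, the right-hand side is $O(\alpha_m\delta_m\,mp_m\delta_{A,m})$. Because $\alpha_m\delta_m\to0$, this term is $o(mp_m\delta_{A,m})$.

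\textbf{Step 2 (Type-II component).} Lemma~\ref{lem:empirical-signal-crossing} gives
\[
\mathbb E(T_m^{\mathrm{GBS}})\le mp_m(1-q_*+\varepsilon)+o(mp_m)
\]
for each fixed $\varepsilon\in(0,q_*)$. I will make the passage behind this bound explicit. On the event $\{R_m^{\mathrm{GBS}}\ge r_m\}$ the number of true rejections is at least $r_m-V_m^{\mathrm{GBS}}$, so $T_m^{\mathrm{GBS}}\le K_m-r_m+V_m^{\mathrm{GBS}}$. On the complement, $T_m^{\mathrm{GBS}}\le K_m$. Taking expectations yields
\[
\mathbb E T_m\le mp_m-r_m+\mathbb E V_m+\mathbb E\bigl[K_m\mathbf 1\{R_m<r_m\}\bigr].
\]
The last term is handled by Cauchy--Schwarz. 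We have $\mathbb E K_m^2=mp_m(1-p_m)+(mp_m)^2=O((mp_m)^2)$, so $\mathbb E[K_m\mathbf 1\{R_m<r_m\}]\le O(mp_m)\,\mathbb P(R_m<r_m)^{1/2}=o(mp_m)$. Also $\mathbb E V_m=O(\alpha_m mp_m)=o(mp_m)$. Since $1-q_*=2\Phi(\sqrt C)-1$, taking $\limsup$ after dividing by $mp_m$ and then letting $\varepsilon\downarrow0$ gives
\[
\delta_{A,m}\mathbb E(T_m^{\mathrm{GBS}})\le mp_m\delta_{A,m}\bigl(2\Phi(\sqrt C)-1\bigr)+o(mp_m\delta_{A,m}).
\]
This is an $\varepsilon$-then-limit argument: the $o(\cdot)$ term depends on $\varepsilon$, so I argue via $\limsup_m$ rather than uniform bounds.

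\textbf{Step 3 (combine).} Adding Steps 1 and 2 and invoking \eqref{eq:oracle_risk} gives
\[
\limsup_m R_m^{\mathrm{GBS}}/R_m^{\mathrm{BO}}\le1.
\]
Oracle optimality gives $R_m^{\mathrm{GBS}}\ge R_m^{\mathrm{BO}}$ for every $m$, hence $\liminf_m R_m^{\mathrm{GBS}}/R_m^{\mathrm{BO}}\ge1$.

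The only genuinely delicate point is Step 2: turning the high-probability statement $\mathbb P(R_m\ge r_m)\to1$ into a bound on the expectation, which needs the uniform-integrability control of $K_m$ just described. If Lemma~\ref{lem:empirical-signal-crossing} is taken as already including the expectation bound, the theorem reduces to the bookkeeping above. I also need the Oracle to be exactly optimal for each finite $m$, not merely asymptotically, for the lower bound; this holds since \eqref{eq:oracle_rule} is the pointwise Bayes rule under additive loss.
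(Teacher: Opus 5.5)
Your sandwich argument is the same as the paper's proof. Corollary~\ref{cor:type1-bound} together with $\alpha_m\delta_m\to0$ makes the Type-I term $o(mp_m\delta_{A,m})$, Lemma~\ref{lem:empirical-signal-crossing} gives the Type-II upper bound, and the exact finite-$m$ optimality of the Oracle gives the matching lower bound.

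One small slip in Step 2 needs fixing. From $T_m\le K_m-r_m+V_m$ on $\{R_m\ge r_m\}$ and $T_m\le K_m$ on the complement, you cannot directly conclude $\mathbb E T_m\le mp_m-r_m+\mathbb E V_m+\mathbb E[K_m\mathbf 1\{R_m<r_m\}]$. The reason is that $K_m-r_m+V_m$ can be negative on $\{R_m<r_m\}$ (for instance when $K_m<r_m$), so an extra term $r_m\mathbb P(R_m<r_m)$ is missing.

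The paper avoids this with the pointwise bound $T_m\le K_m-r_m+V_m+r_m\mathbf 1\{R_m<r_m\}$. On $\{R_m<r_m\}$ the right side equals $K_m+V_m\ge T_m$. This bound also makes your Cauchy--Schwarz and second-moment control of $K_m$ unnecessary: $r_m=O(mp_m)$ and $\mathbb P(R_m<r_m)\to0$ already give an $o(mp_m)$ remainder. The conclusion of your argument is unaffected.
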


\begin{proof}
Let \(V_m^{GBS}\) and \(T_m^{GBS}\) denote, respectively, the numbers of false discoveries and false nondiscoveries made by the GBS procedure. Then its Bayes risk is
\[
R_m^{GBS}
=
\delta_{0,m}\mathbb{E}(V_m^{GBS})
+
\delta_{A,m}\mathbb{E}(T_m^{GBS}).
\]	

By Corollary~\ref{cor:type1-bound},
	\[
	\mathbb{E}(V_m^{GBS})
	=
	O(\alpha_m m p_m).
	\]
	Therefore, writing \(\delta_m=\delta_{0,m}/\delta_{A,m}\),
	\[
	\delta_{0,m}\mathbb{E}(V_m^{GBS})
	=
	O(\alpha_m\delta_m\, m p_m\delta_{A,m})
	=
	o(m p_m\delta_{A,m}),
	\]
	since \(\alpha_m\delta_m\to0\) as \(m\to\infty\).
	
	On the other hand, Lemma~\ref{lem:empirical-signal-crossing} yields
	\[
	\mathbb{E}(T_m^{GBS})
	\le
	m p_m\bigl(2\Phi(\sqrt C)-1\bigr)
	+o(m p_m).
	\]
Multiplying both sides by $\delta_{A,m}$,
	\[
	\delta_{A,m}\mathbb{E}(T_m^{GBS})
	\le
	m p_m\delta_{A,m}
	\bigl(2\Phi(\sqrt C)-1\bigr)
	+o(m p_m\delta_{A,m}).
	\]
	Combining the Type-I and Type-II bounds gives
	\[
	R_m^{GBS}
	\le
	m p_m\delta_{A,m}
	\bigl(2\Phi(\sqrt C)-1\bigr)
	+o(m p_m\delta_{A,m}).
	\]
	By the asymptotic expansion of the Bayes Oracle risk in \eqref{eq:oracle_risk},
	\[
	R_m^{BO}
	=
	m p_m\delta_{A,m}
	\bigl(2\Phi(\sqrt C)-1\bigr)(1+o(1)).
	\]
	Thus
	\[
	R_m^{GBS}
	\le
	R_m^{BO}(1+o(1)),
	\]
	for all sufficiently large \(m\). Since the Bayes Oracle minimizes the Bayes risk under the two-groups model,
	\[
	R_m^{GBS}\ge R_m^{BO},
	\]
	for all $m\ge 1$.
	Therefore, for all sufficiently large \(m\), we have
	\[
	1
	\le
	\frac{R_m^{GBS}}{R_m^{BO}}
	\le
	1+o(1),
	\]
whence
	\[
	\frac{R_m^{GBS}}{R_m^{BO}}
	\to 1 \ \textrm{as} \ m\to\infty.
	\]
	Hence the GBS procedure possesses the ABOS property.
\end{proof}

Theorem~\ref{thm:gbs-abos} provides the desired decision-theoretic justification
for the GBS step-down procedure in the sparse Gaussian sequence model. Although
the GBS procedure is defined through a frequentist FDR-controlling principle, its
Bayes risk asymptotically matches that of the Bayes Oracle under the
spike-and-slab formulation considered here. Thus, the finite-sample power
advantage of the GBS procedure is accompanied by optimal asymptotic risk
behavior under sparsity.

The result also highlights the role of the proof strategy developed in this paper.
Rather than approximating the random rejection threshold of the procedure by a
deterministic surrogate, the ABOS property is obtained through direct control of
the two components of the Bayes risk. The shifted order-statistic inequality
controls the false-discovery contribution, while the empirical signal-crossing
argument controls the false-nondiscovery contribution. Together, these two
ingredients show that the GBS procedure achieves Oracle-level performance under
the sparse asymptotic regime.

\section{Discussion}
\label{sec:Discussion}

In this paper, we established the asymptotic Bayes optimality under
sparsity of the Gavrilov--Benjamini--Sarkar step-down testing procedure \citep{GBS2009} in the
sparse Gaussian sequence model under the asymptotic framework of
\citet{BCFG2011}. To the best of our knowledge, this provides the first decision-theoretic justification for the GBS procedure and extends the class of multiple testing methods known to possess the ABOS property. Our results show that the favorable finite-sample power characteristics
of the GBS procedure are accompanied by asymptotically optimal Bayes risk behavior under sparsity.

A notable feature of our analysis is that it does not rely on the threshold-localization techniques that have played a central role in existing ABOS analyses of the Benjamini--Hochberg procedure. Instead, we developed a direct Bayes-risk analysis based on separate control of
the false-discovery and false-nondiscovery components. The Type-I analysis relied on a new finite-sample inequality for shifted order
statistics associated with the GBS critical constants, while the Type-II analysis was based on a signal-crossing argument exploiting the
sequential structure of the step-down procedure. Together, these ingredients provide an alternative route for establishing asymptotic
optimality properties of multiple testing procedures.

It is also worth noting that the GBS procedure enjoys an additional decision-theoretic optimality property beyond the asymptotic Bayes optimality established in the present paper. Specifically, under independence, the GBS procedure belongs to the broader class of residual-based step-down procedures studied by \citet{GC_ADMISSIBILITY_2026}. Since the admissibility results developed therein hold under arbitrary covariance dependence, they imply, as a special case, that the GBS procedure is admissible with respect to the vector-valued loss considered in that work. Consequently, in addition to its false discovery rate control and the ABOS property established here, the GBS procedure also possesses a finite-sample decision-theoretic optimality property in the sense of admissibility. Together with the near-Oracle risk behavior observed in recent simulation studies \citep{GhoshChakrabartiBSD2026}, these findings suggest that the GBS methodology simultaneously enjoys false discovery rate control, finite-sample admissibility, and asymptotic Bayes optimality under sparsity, a combination of properties that appears to be relatively uncommon among computationally feasible large-scale multiple testing procedures. The present work also suggests that direct risk analysis may provide a useful and potentially more broadly applicable alternative to threshold-localization arguments for studying asymptotic optimality properties of sequential multiple testing procedures.

Several directions remain open. An important direction for future research
is the development of decision-theoretic optimality theory for multiple
testing procedures under dependence. While the present work establishes the
ABOS property of the GBS procedure under independence, considerably less is
known about the corresponding question in dependent settings. A fundamental
difficulty is that, under general dependence, even the characterization of
the Bayes Oracle and its associated risk can become highly nontrivial.
Unlike the independence setting, where the Oracle decomposes into a
collection of one-dimensional decisions, dependence induces complex
interactions among hypotheses that may substantially alter the structure of
the optimal decision rule. Consequently, deriving asymptotic expressions
for the Oracle risk itself becomes a challenging problem. Recent empirical
studies of dependence-aware multiple testing procedures
\citep{ghosh2026covariance,GhoshChakrabartiBSD2026}
suggest that near-Oracle support recovery and risk behavior may persist
under a broad range of covariance structures. Establishing rigorous
decision-theoretic optimality results in such settings remains an important
open problem. Beyond dependence, the proof strategy developed here may be
useful for studying other step-down multiple testing procedures whose
random rejection thresholds are difficult to analyze through existing
localization arguments. It would also be interesting to investigate whether
stronger optimality properties, including sharp asymptotic minimaxity, can
be established for the GBS procedure under the more refined sparse
asymptotic frameworks recently considered in the literature.

More generally, the present work contributes to a growing body of literature seeking decision-theoretic explanations for the success of multiple testing procedures. Understanding how multiplicity,
dependence, sparsity, and signal strength interact to determine the fundamental limits of large-scale inference remains a challenging and largely open area of research.

\appendix  
\section*{Appendix}
\label{app}

\section{Proofs of the Main Theoretical Results}

This appendix contains the proofs of Lemmas~\ref{lem:gbs-shifted-null-bound}--\ref{lem:empirical-signal-crossing} and Corollary~\ref{cor:type1-bound} used in the
development of the asymptotic Bayes optimality analysis in Section~\ref{sec:ABOS-GBS-Theory}.

\subsection*{Proof of Lemma~\ref{lem:gbs-shifted-null-bound}}
\begin{proof}
	For \(j=1,\ldots,n\), define the events
	\begin{equation}\notag
		A_j^{(n,k,\alpha)}
		=
		\left\{
		U_{(\ell)}\leq b_{\ell,k}^{(n)}(\alpha),
		\ \ell=1,\ldots,j
		\right\}.
	\end{equation}
	Let
	\begin{equation}
		\label{eq:Lem1_Sn_Definition}
		S_n(k,\alpha)
		=
		\sum_{j=1}^{n}P\left(A_j^{(n,k,\alpha)}\right).
	\end{equation}
	We prove by induction on \(n\) that
	\begin{equation}
		\label{eq:Lem1_Induction_Claim}
		S_n(k,\alpha)
		\leq
		\frac{\alpha}{1-\alpha}(k+1),
	\end{equation}
	for all \(n\geq 1\), \(k\geq0\), and \(0<\alpha<1\).
	
	For \(n=1\),
	\begin{equation}\notag
		\label{eq:Lem1_Base_Step}
		S_1(k,\alpha)
		=
		P\left(
		U_{(1)}
		\le
		b_{1,k}^{(1)}(\alpha)
		\right)
		=
		b_{1,k}^{(1)}(\alpha),
	\end{equation}
	because \(U_{(1)}\), being a single observation drawn from \(U(0,1)\), is uniformly distributed over \((0,1)\). Now
	\begin{equation}\notag
		\label{eq:Lem1_Base_Boundary}
		b_{1,k}^{(1)}(\alpha)
		=
		\frac{(k+1)\alpha}
		{1+k+1-(k+1)(1-\alpha)}
		=
		\frac{(k+1)\alpha}
		{1+(k+1)\alpha}.
	\end{equation}
	Since
	\begin{equation}\notag
		\label{eq:Lem1_Base_Inequality}
		\frac{(k+1)\alpha}
		{1+(k+1)\alpha}
		\leq
		\frac{\alpha}{1-\alpha}(k+1),
	\end{equation}
	the claim holds for \(n=1\).
	
	Now assume that the result holds for \(n-1\), where \(n\geq2\). We prove it
	for \(n\). Let
	\begin{equation}\notag
		\label{eq:Lem1_b1_Definition}
		b_1=b_{1,k}^{(n)}(\alpha).
	\end{equation}
	Since
	\begin{equation}\notag
		\label{eq:Lem1_b1_Odds}
		\frac{b_1}{1-b_1}
		=
		\frac{(k+1)\alpha}{n},
	\end{equation}
	we have
	\begin{equation}
		\label{eq:Lem1_b1_Formula}
		b_1
		=
		\frac{(k+1)\alpha}{n+(k+1)\alpha}.
	\end{equation}
	
	Condition on \(U_{(1)}=u\), where \(0<u<b_1\). Given \(U_{(1)}=u\), the
	remaining \(n-1\) order statistics, after the affine transformation
	\begin{equation}\notag
		\label{eq:Lem1_Affine_Transformation}
		W_i=\frac{U_i-u}{1-u},
	\end{equation}
	are distributed as the order statistics of \(n-1\) independent \(U(0,1)\)
	random variables.
	
	Therefore, if the first crossing condition \(U_{(1)}\le b_1\) is satisfied, the
	remaining crossing conditions may be rewritten in terms of the transformed order
	statistics \(W_{(1)},\ldots,W_{(n-1)}\). More precisely, for
	\(\ell=1,\ldots,n-1\), the residual boundary is
	\begin{equation}\notag
		\label{eq:Lem1_Residual_Boundary}
		\frac{b_{\ell+1,k}^{(n)}(\alpha)-u}{1-u}.
	\end{equation}
	
	Thus, conditioning on \(U_{(1)}=u\), we obtain
	\begin{equation}
		\label{eq:Lem1_Sn_Recursive_Preliminary}
		S_n(k,\alpha)
		=
		\int_0^{b_1}
		n(1-u)^{n-1}
		\left\{
		1+
		\widetilde S_{n-1}(u)
		\right\}
		\,du,
	\end{equation}
	where
	\begin{equation}\notag
		\label{eq:Lem1_TildeS_Definition}
		\widetilde S_{n-1}(u)
		=
		\sum_{j=1}^{n-1}
		P\left(
		W_{(\ell)}
		\le
		\frac{b_{\ell+1,k}^{(n)}(\alpha)-u}{1-u},
		\ \ell=1,\ldots,j
		\right).
	\end{equation}
	
	We now show that the residual sum \(\widetilde S_{n-1}(u)\) is again of the same
	form as \(S_{n-1}\), but with transformed parameters. Put
	\begin{equation}\notag
		x=\frac{u}{1-u}.
	\end{equation}
	Define
	\begin{equation}\notag
		\alpha_u
		=
		\frac{\alpha+x}{1+x}
	\end{equation}
	and
	\begin{equation}\notag
		k_u
		=
		\frac{(k+1)\alpha-nx}{\alpha+x}.
	\end{equation}
	Since
	\begin{equation}\notag
		0<u<b_1
		=
		\frac{(k+1)\alpha}
		{n+(k+1)\alpha},
	\end{equation}
	and the map \(u\mapsto u/(1-u)\) is strictly increasing on \((0,1)\),
	\begin{equation}\notag
		0<x=\frac{u}{1-u}
		<
		\frac{b_1}{1-b_1}
		=
		\frac{(k+1)\alpha}{n}.
	\end{equation}
	Consequently,
	\begin{equation}\notag
		(k+1)\alpha-nx>0.
	\end{equation}
	Since \(\alpha+x>0\), it follows that
	\begin{equation}\notag
		k_u
		=
		\frac{(k+1)\alpha-nx}{\alpha+x}
		>0.
	\end{equation}
	Also \(0<\alpha_u<1\).
	
	We claim that for every \(\ell=1,\ldots,n-1\),
	\begin{equation}
		\label{eq:Lem1_Boundary_Transformation_Claim}
		\frac{b_{\ell+1,k}^{(n)}(\alpha)-u}{1-u}
		=
		b_{\ell,k_u}^{(n-1)}(\alpha_u).
	\end{equation}
	To verify this claim, first observe that
	\begin{equation}
	\label{eq:Lem1_Odds_Ratio}
		\frac{b_{\ell,k}^{(n)}(\alpha)}
		{1-b_{\ell,k}^{(n)}(\alpha)}
		=
		\frac{(k+\ell)\alpha}{n+1-\ell}.
	\end{equation}
	Again, observe that
	\begin{equation}\notag
		1-\frac{b_{\ell+1,k}^{(n)}(\alpha)-u}{1-u}
		=
		\frac{1-b_{\ell+1,k}^{(n)}(\alpha)}
		{1-u}.
	\end{equation}
	Hence
	\begin{eqnarray}
	\label{eq:Lem1_Residual_Odds}
		\frac{
			\left(b_{\ell+1,k}^{(n)}(\alpha)-u\right)/(1-u)
		}{
			1-\left(b_{\ell+1,k}^{(n)}(\alpha)-u\right)/(1-u)
		}
		&=&
		\frac{
			\left(b_{\ell+1,k}^{(n)}(\alpha)-u\right)/(1-u)
		}{
			\left(1-b_{\ell+1,k}^{(n)}(\alpha)\right)/(1-u)
		}
		\nonumber\\
		&=&
		\frac{
			b_{\ell+1,k}^{(n)}(\alpha)-u
		}{
			1-b_{\ell+1,k}^{(n)}(\alpha)
		}.
	\end{eqnarray}
	Using \eqref{eq:Lem1_Odds_Ratio} with \(\ell\) replaced by \(\ell+1\), we get
	\begin{equation}
		\label{eq:Lem1_Shifted_Odds_Ratio}
		\frac{b_{\ell+1,k}^{(n)}(\alpha)}
		{1-b_{\ell+1,k}^{(n)}(\alpha)}
		=
		\frac{(k+\ell+1)\alpha}{n-\ell}.
	\end{equation}
	Writing \(x=u/(1-u)\) in \eqref{eq:Lem1_Residual_Odds} and using
	\eqref{eq:Lem1_Shifted_Odds_Ratio}, we obtain
	\begin{equation}\notag
		\frac{b_{\ell+1,k}^{(n)}(\alpha)-u}
		{1-b_{\ell+1,k}^{(n)}(\alpha)}
		=
		\frac{
			(k+\ell+1)\alpha-(n-\ell)x
		}
		{(1+x)(n-\ell)}.
	\end{equation}
	
	Again, using the definition of \(b_{\ell+1,k}^{(n)}(\alpha)\), we have
	\begin{equation}
	\label{eq:Lem1_Boundary_Definition}
		b_{\ell+1,k}^{(n)}(\alpha)
		=
		\frac{(k+\ell+1)\alpha}
		{n+k+1-(k+\ell+1)(1-\alpha)}.
	\end{equation}
	Since
	\begin{equation}\notag
		n+k+1-(k+\ell+1)(1-\alpha)
		=
		n-\ell+(k+\ell+1)\alpha,
	\end{equation}
	using \eqref{eq:Lem1_Boundary_Definition}, we may write
	\begin{equation}
	\label{eq:Lem1_Boundary_Simplified}
		b_{\ell+1,k}^{(n)}(\alpha)
		=
		\frac{(k+\ell+1)\alpha}
		{n-\ell+(k+\ell+1)\alpha}.
	\end{equation}
	Therefore,
	\begin{equation}
	\label{eq:Lem1_Boundary_Complement}
		1-b_{\ell+1,k}^{(n)}(\alpha)
		=
		\frac{n-\ell}
		{n-\ell+(k+\ell+1)\alpha}.
	\end{equation}
	Hence, combining
	\eqref{eq:Lem1_Boundary_Simplified}
	and
	\eqref{eq:Lem1_Boundary_Complement}, we obtain
	\begin{eqnarray}
	\label{eq:Lem1_Residual_Odds_Expansion}
		\frac{b_{\ell+1,k}^{(n)}(\alpha)-u}
		{1-b_{\ell+1,k}^{(n)}(\alpha)}
		&=&
		\frac{
			(k+\ell+1)\alpha
			-
			u\{n-\ell+(k+\ell+1)\alpha\}
		}
		{n-\ell}.
	\end{eqnarray}
	Since
	\begin{equation}\notag
		u=\frac{x}{1+x},
	\end{equation}
	using \eqref{eq:Lem1_Residual_Odds_Expansion}, we get
	\begin{equation}	\label{eq:Lem1_Residual_Odds_Formula_Second}
		\frac{b_{\ell+1,k}^{(n)}(\alpha)-u}
		{1-b_{\ell+1,k}^{(n)}(\alpha)}
		=
		\frac{
			(k+\ell+1)\alpha-(n-\ell)x
		}
		{(1+x)(n-\ell)}.
	\end{equation}
	
	On the other hand, using the definitions of \(k_u\) and \(\alpha_u\), we obtain
	\begin{eqnarray}	\label{eq:Lem1_ku_alphau_identity}
		\frac{(k_u+\ell)\alpha_u}{n-\ell}
		&=&
		\frac{
			\left(
			\frac{(k+1)\alpha-nx}{\alpha+x}
			+\ell
			\right)
			\frac{\alpha+x}{1+x}
		}
		{n-\ell}
		\nonumber\\
		&=&
		\frac{
			(k+\ell+1)\alpha-(n-\ell)x
		}
		{(1+x)(n-\ell)}.
	\end{eqnarray}
	Combining
	\eqref{eq:Lem1_Residual_Odds},
	\eqref{eq:Lem1_Residual_Odds_Formula_Second},
	\eqref{eq:Lem1_ku_alphau_identity},
	and \eqref{eq:Lem1_Odds_Ratio}
	with \(n\) replaced by \(n-1\), \(k\) by \(k_u\), and \(\alpha\) by \(\alpha_u\),
	we obtain
	\begin{equation}
	\label{eq:Lem1_Final_Odds_Equality}
		\frac{
			\left(b_{\ell+1,k}^{(n)}(\alpha)-u\right)/(1-u)
		}{
			1-\left(b_{\ell+1,k}^{(n)}(\alpha)-u\right)/(1-u)
		}
		=
		\frac{
			b_{\ell,k_u}^{(n-1)}(\alpha_u)
		}{
			1-b_{\ell,k_u}^{(n-1)}(\alpha_u)
		}.
	\end{equation}
	Since the map \(t\mapsto t/(1-t)\) is one-to-one on \((0,1)\), it follows from
	\eqref{eq:Lem1_Final_Odds_Equality} that
	\begin{equation}\notag
		\frac{b_{\ell+1,k}^{(n)}(\alpha)-u}{1-u}
		=
		b_{\ell,k_u}^{(n-1)}(\alpha_u).
	\end{equation}
	This establishes our claim \eqref{eq:Lem1_Boundary_Transformation_Claim}.
	
	Consequently,
	\begin{equation}
		\label{eq:Lem1_TildeS_Equals_S}
		\widetilde S_{n-1}(u)=S_{n-1}(k_u,\alpha_u).
	\end{equation}
	Combining \eqref{eq:Lem1_Sn_Recursive_Preliminary} and
	\eqref{eq:Lem1_TildeS_Equals_S}, we obtain
	\begin{equation}
		\label{eq:Lem1_Sn_Recursive_Final}
		S_n(k,\alpha)
		=
		\int_0^{b_1}
		n(1-u)^{n-1}
		\left\{
		1+S_{n-1}(k_u,\alpha_u)
		\right\}
		\,du.
	\end{equation}
	By the induction hypothesis,
	\begin{equation}\notag
		S_{n-1}(k_u,\alpha_u)
		\leq
		\frac{\alpha_u}{1-\alpha_u}(k_u+1).
	\end{equation}
	Now
	\begin{equation}\notag
		1-\alpha_u
		=
		1-\frac{\alpha+x}{1+x}
		=
		\frac{1-\alpha}{1+x}.
	\end{equation}
	Hence
	\begin{equation}\notag
		\frac{\alpha_u}{1-\alpha_u}
		=
		\frac{\alpha+x}{1-\alpha}.
	\end{equation}
	Also,
	\begin{equation}\notag
		k_u+1
		=
		\frac{(k+1)\alpha-nx}{\alpha+x}+1
		=
		\frac{(k+2)\alpha-(n-1)x}{\alpha+x}.
	\end{equation}
	Therefore,
	\begin{equation}\notag
		\frac{\alpha_u}{1-\alpha_u}(k_u+1)
		=
		\frac{(k+2)\alpha-(n-1)x}{1-\alpha}.
	\end{equation}
	It follows that
	\begin{eqnarray}
    \label{eq:Lem1_One_Plus_Bound}
		1+S_{n-1}(k_u,\alpha_u)
		&\leq&
		1+
		\frac{(k+2)\alpha-(n-1)x}{1-\alpha}
		\nonumber\\
		&=&
		\frac{
			1+(k+1)\alpha-(n-1)x
		}
		{1-\alpha}.
	\end{eqnarray}
	
	Consequently, since \(x=u/(1-u)\), we obtain from
	\eqref{eq:Lem1_Sn_Recursive_Final} and \eqref{eq:Lem1_One_Plus_Bound} that
	\begin{equation}
		\label{eq:Lem1_Sn_Integral_Bound}
		S_n(k,\alpha)
		\leq
		\frac{1}{1-\alpha}
		\int_0^{b_1}
		n(1-u)^{n-1}
		\left\{
		1+(k+1)\alpha-(n-1)\frac{u}{1-u}
		\right\}
		\,du.
	\end{equation}
	Let
	\begin{equation}\notag
		A=(k+1)\alpha.
	\end{equation}
	Then, by \eqref{eq:Lem1_b1_Formula},
	\begin{equation}\notag
		b_1=\frac{A}{n+A}.
	\end{equation}
	The integral in \eqref{eq:Lem1_Sn_Integral_Bound} is
	\begin{equation}
		\label{eq:Lem1_I_Definition}
		I
		=
		\int_0^{b_1}
		n(1-u)^{n-1}
		\left\{
		1+A-(n-1)\frac{u}{1-u}
		\right\}
		\,du.
	\end{equation}
	Rewrite the integrand as
	\begin{equation}\notag
		n(1-u)^{n-1}(1+A)
		-
		n(n-1)u(1-u)^{n-2}.
	\end{equation}
	Since
	\begin{equation}\notag
		\frac{d}{du}\left\{nu(1-u)^{n-1}\right\}
		=
		n(1-u)^{n-1}
		-
		n(n-1)u(1-u)^{n-2},
	\end{equation}
	we get
	\begin{equation}\notag
		I
		=
		\left[nu(1-u)^{n-1}\right]_{0}^{b_1}
		+
		A\int_0^{b_1}n(1-u)^{n-1}\,du.
	\end{equation}
	Therefore,
	\begin{equation}\notag
		I
		=
		nb_1(1-b_1)^{n-1}
		+
		A\left\{1-(1-b_1)^n\right\}.
	\end{equation}
	Since
	\begin{equation}\notag
		b_1=\frac{A}{n+A},
		\qquad
		1-b_1=\frac{n}{n+A},
	\end{equation}
	we have
	\begin{eqnarray}\notag
		nb_1(1-b_1)^{n-1}
		&=&
		n\frac{A}{n+A}
		\left(\frac{n}{n+A}\right)^{n-1}
		\nonumber\\
		&=&
		A\left(\frac{n}{n+A}\right)^n.\nonumber
	\end{eqnarray}
	Thus
	\begin{eqnarray}
		\label{eq:Lem1_I_Equals_A}
		I
		&=&
		A\left(\frac{n}{n+A}\right)^n
		+
		A\left\{
		1-
		\left(\frac{n}{n+A}\right)^n
		\right\}
		\nonumber\\
		&=&
		A.
	\end{eqnarray}
	Hence, by \eqref{eq:Lem1_Sn_Integral_Bound},
	\eqref{eq:Lem1_I_Definition}, and \eqref{eq:Lem1_I_Equals_A},
	\begin{equation}\notag
		S_n(k,\alpha)
		\leq
		\frac{A}{1-\alpha}
		=
		\frac{\alpha}{1-\alpha}(k+1).
	\end{equation}
	This is precisely \eqref{eq:Lem1_Induction_Claim}. Hence the induction is complete,
	and the lemma follows.
\end{proof}

\subsection*{Proof of Corollary~\ref{cor:type1-bound}}

\begin{proof}
	Let \(K_m=k\), and put \(n=m-k\). Conditional on \(K_m=k\), there are \(k\)
	non-null \(p\)-values and \(n\) null \(p\)-values. Let
	\begin{equation*}
		P_{(1)}\le \cdots \le P_{(n)}
	\end{equation*}
	denote the order statistics of the null \(p\)-values. Under independence,
	these are the order statistics of \(n\) independent \(U(0,1)\) random
	variables.
	
	Fix \(j=1,\ldots,n\). Suppose that the GBS procedure makes at least \(j\)
	false discoveries. Then at least the \(j\) smallest null \(p\)-values,
	\(P_{(1)},\ldots,P_{(j)}\), must be rejected.
	
	For \(\ell=1,\ldots,j\), let \(R_\ell\) denote the rank of \(P_{(\ell)}\)
	among all \(m\) ordered \(p\)-values. Since there are exactly \(k\) non-null
	\(p\)-values, at most \(k\) non-null \(p\)-values can be smaller than
	\(P_{(\ell)}\). Moreover, among the null \(p\)-values, exactly \(\ell-1\)
	values are smaller than \(P_{(\ell)}\). Therefore,
	\begin{equation*}
		R_\ell \le k+(\ell-1)+1=k+\ell.
	\end{equation*}
	
	If the null hypothesis corresponding to \(P_{(\ell)}\) is rejected and \(P_{(\ell)}\) has an overall rank \(R_\ell\), then by the
	definition of the GBS step-down procedure,
	\begin{equation*}
		P_{(\ell)}\le c_{R_\ell}.
	\end{equation*}
	Since the GBS critical constants \(\{c_i\}\) are non-decreasing in \(i\) and
	\(R_\ell\le k+\ell\), it follows that
	\begin{equation*}
		P_{(\ell)}
		\le c_{R_\ell}
		\le c_{k+\ell},
		\qquad \ell=1,\ldots,j.
	\end{equation*}
	
	Consequently, conditional on \(K_m=k\),
	\begin{equation*}
		\{V_m^{\mathrm{GBS}}\ge j\}
		\subseteq
		\left\{
		P_{(\ell)}\le c_{k+\ell},
		\ \ell=1,\ldots,j
		\right\},
	\end{equation*}
	and therefore
	\begin{equation*}
		\mathbb{P}\!\left(
		V_m^{\mathrm{GBS}}\ge j
		\,\middle|\,
		K_m=k
		\right)
		\le
		\mathbb{P}\!\left(
		P_{(\ell)}\le c_{k+\ell},
		\ \ell=1,\ldots,j
		\,\middle|\,
		K_m=k
		\right).
	\end{equation*}
	
	Since, conditional on \(K_m=k\), the null \(p\)-values are independent and
	identically distributed \(U(0,1)\) random variables, their order statistics
	satisfy
	\begin{equation*}
		(P_{(1)},\ldots,P_{(n)}\mid K_m=k)
		\stackrel{d}{=}
		(U_{(1)},\ldots,U_{(n)}),
	\end{equation*}
	where \(U_{(1)}\le\cdots\le U_{(n)}\) are the order statistics of \(n\)
	independent \(U(0,1)\) random variables. Hence
	\begin{equation*}
		\mathbb{P}\!\left(
		V_m^{\mathrm{GBS}}\ge j
		\,\middle|\,
		K_m=k
		\right)
		\le
		\mathbb{P}\!\left(
		U_{(\ell)}\le c_{k+\ell},
		\ \ell=1,\ldots,j
		\right).
	\end{equation*}
	
	Using the identity
	\begin{equation*}
		\mathbb{E}(Y)
		=
		\sum_{j\ge1}\mathbb{P}(Y\ge j)
	\end{equation*}
	for a nonnegative integer-valued random variable \(Y\), we obtain
	\begin{equation*}
		\mathbb{E}\!\left(V_m^{\mathrm{GBS}}\mid K_m=k\right)
		\le
		\sum_{j=1}^{n}
		\mathbb{P}\!\left(
		U_{(\ell)}\le c_{k+\ell},
		\ \ell=1,\ldots,j
		\right).
	\end{equation*}
	
	Now, since \(n=m-k\),
	\begin{align*}
		c_{k+\ell}
		&=
		\frac{(k+\ell)\alpha_m}
		{m+1-(k+\ell)(1-\alpha_m)}
		\\
		&=
		\frac{(k+\ell)\alpha_m}
		{n+k+1-(k+\ell)(1-\alpha_m)}
		\\
		&=
		b_{\ell,k}^{(n)}(\alpha_m).
	\end{align*}
	Hence Lemma~\ref{lem:gbs-shifted-null-bound} gives
	\begin{equation*}
		\mathbb{E}\!\left(V_m^{\mathrm{GBS}}\mid K_m=k\right)
		\le
		\frac{\alpha_m}{1-\alpha_m}(k+1).
	\end{equation*}
	
	Taking expectations with respect to \(K_m\) and using
	\(K_m\sim\mathrm{Bin}(m,p_m)\), we obtain
	\begin{align*}
		\mathbb{E}\!\left(V_m^{\mathrm{GBS}}\right)
		&\le
		\frac{\alpha_m}{1-\alpha_m}\mathbb{E}(K_m+1)
		\\
		&=
		\frac{\alpha_m}{1-\alpha_m}(mp_m+1).
	\end{align*}
	
	Since \(mp_m\to\infty\) and \(\alpha_m\to0\), we have
	\begin{equation*}
		mp_m+1=O(mp_m),
		\qquad
		\frac{1}{1-\alpha_m}=O(1).
	\end{equation*}
	Therefore,
	\begin{equation*}
		\mathbb{E}\!\left(V_m^{\mathrm{GBS}}\right)
		=
		O(\alpha_m mp_m).
	\end{equation*}
	
	This completes the proof of Corollary~\ref{cor:type1-bound}.
\end{proof}

\subsection*{Proof of Lemma~\ref{lem:deterministic-signal-crossing}}

\begin{proof}
	Under the alternative hypothesis \(H_{Ai}\),
	\begin{equation*}
		\frac{X_i}{\sigma}\sim N(0,1+u_m).
	\end{equation*}
	For \(0<t<1\), write
	\begin{equation*}
		z_t=\Phi^{-1}\left(1-\frac{t}{2}\right).
	\end{equation*}
	Since \(P_i\le t\) is equivalent to
	\(|X_i|/\sigma\ge z_t\), we have
	\begin{equation}
		\label{eq:Lem2_Gm_formula}
		G_m(t)
		=
		\mathbb{P}(P_i\le t\mid i\in\mathcal H_0^c)
		=
		2\bar\Phi\left(\frac{z_t}{\sqrt{1+u_m}}\right).
	\end{equation}
	
	We first record a monotonicity property that will be used to obtain the
	uniform bound. Put
	\begin{equation*}
		s_m=\sqrt{1+u_m}>1.
	\end{equation*}
	Since \(t=2\bar\Phi(z_t)\), it follows from
	\eqref{eq:Lem2_Gm_formula} that
	\begin{equation}
		\label{eq:Lem2_ratio_representation}
		\frac{G_m(t)}{t}
		=
		\frac{\bar\Phi(z_t/s_m)}
		{\bar\Phi(z_t)}.
	\end{equation}
	For \(z>0\), define
	\begin{equation*}
		R_m(z)
		=
		\frac{\bar\Phi(z/s_m)}
		{\bar\Phi(z)}.
	\end{equation*}
	Let
	\begin{equation*}
		h(x)=\frac{\phi(x)}{\bar\Phi(x)}
	\end{equation*}
	denote the standard normal hazard function. Since \(h\) is positive and
	increasing on \((0,\infty)\), we have
	\begin{align}
		\frac{d}{dz}\log R_m(z)
		&=
		h(z)-\frac{1}{s_m}h(z/s_m)
		\notag\\
		&=
		\{h(z)-h(z/s_m)\}
		+
		\left(1-\frac1{s_m}\right)h(z/s_m)
		>
		0.
		\label{eq:Lem2_ratio_derivative_positive}
	\end{align}
	Thus \(R_m(z)\) is increasing in \(z\). Since \(z_t\) is decreasing in \(t\),
	\eqref{eq:Lem2_ratio_representation} implies that \(G_m(t)/t\) is
	non-increasing in \(t\).
	
	Fix \(0<\varepsilon<q_*\), and put
	\begin{equation*}
		q_\varepsilon=q_*-\varepsilon.
	\end{equation*}
	Define
	\begin{equation}
		\label{eq:Lem2_tm_defn}
		t_m=q_\varepsilon\alpha_m p_m.
	\end{equation}
	We now establish the limiting behavior of \(G_m(t_m)\). Since
	\(q_\varepsilon\) is fixed and positive,
	\begin{equation}
		\label{eq:Lem2_tm_log_decomposition}
		2\log\left(\frac1{t_m}\right)
		=
		2\log\left(\frac1{\alpha_m p_m}\right)
		+
		2\log\left(\frac1{q_\varepsilon}\right).
	\end{equation}
	We next relate the calibration condition involving \(f_m/r_{\alpha_m}\) to
	the quantity \(\alpha_m p_m\). Since \(p_m\to0\) and
	\(\alpha_m\to0\) as \(m\to\infty\),
	\begin{equation}
		\label{eq:Lem2_calibration_equiv_log}
		\log\left(\frac{f_m}{r_{\alpha_m}}\right)
		=
		\log\left(\frac1{\alpha_m p_m}\right)+o(1).
	\end{equation}
Consequently, the first condition in
\eqref{eq:gbs-calibration-condition} implies
	\begin{equation}
		\label{eq:Lem2_calibration_alpha_p}
		\frac{2\log\{1/(\alpha_m p_m)\}}{u_m}
		\to C
		\quad \textrm{as } m\to\infty.
	\end{equation}
	Combining \eqref{eq:Lem2_tm_log_decomposition} and
	\eqref{eq:Lem2_calibration_alpha_p}, and using the fact that
	\(q_\varepsilon\) is fixed, we obtain
	\begin{equation}
		\label{eq:Lem2_tm_log_limit}
		\frac{2\log(1/t_m)}{u_m}
		\to C
		\quad \textrm{as } m\to\infty.
	\end{equation}
	By the standard Gaussian quantile expansion derived from Mills' ratio,
	\begin{equation}
		\label{eq:Lem2_tm_quantile_expansion}
		z_{t_m}^{2}
		=
		2\log\left(\frac1{t_m}\right)\{1+o(1)\}.
	\end{equation}
	Hence, by \eqref{eq:Lem2_tm_log_limit},
	\begin{equation}
		\label{eq:Lem2_tm_z_limit}
		\frac{z_{t_m}}{\sqrt{1+u_m}}
		\to
		\sqrt C
		\quad \textrm{as } m\to\infty.
	\end{equation}
	Therefore, by \eqref{eq:Lem2_Gm_formula},
	\begin{equation}
		\label{eq:Lem2_tm_Gm_limit}
		G_m(t_m)
		=
		2\bar\Phi\left(
		\frac{z_{t_m}}{\sqrt{1+u_m}}
		\right)
		\to
		2\bar\Phi(\sqrt C)
		=
		q_*
		\quad \textrm{as } m\to\infty.
	\end{equation}
	
	Now let \(0<q\le q_\varepsilon\). Then
	\(q\alpha_m p_m\le t_m\). Since \(G_m(t)/t\) is non-increasing in \(t\),
	we have
	\begin{equation}
		\label{eq:Lem2_ratio_monotonicity_application}
		\frac{G_m(q\alpha_m p_m)}
		{q\alpha_m p_m}
		\ge
		\frac{G_m(t_m)}{t_m}.
	\end{equation}
	Using \(t_m=q_\varepsilon\alpha_m p_m\), this gives
	\begin{equation}
		\label{eq:Lem2_q_lower_bound_ratio}
		\frac{G_m(q\alpha_m p_m)}{q}
		\ge
		\frac{G_m(t_m)}{q_\varepsilon}.
	\end{equation}
	By \eqref{eq:Lem2_tm_Gm_limit},
	\begin{equation}
		\label{eq:Lem2_ratio_limit}
		\frac{G_m(t_m)}{q_\varepsilon}
		\to
		\frac{q_*}{q_*-\varepsilon}
		=
		1+\frac{\varepsilon}{q_*-\varepsilon}
		\quad \textrm{as } m\to\infty.
	\end{equation}
	Therefore, for all sufficiently large \(m\),
	\begin{equation}
		\label{eq:Lem2_final_uniform_crossing}
		G_m(q\alpha_m p_m)
		\ge
		\left(
		1+\frac{\varepsilon}{2(q_*-\varepsilon)}
		\right)q,
	\end{equation}
	uniformly over \(0<q\le q_*-\varepsilon\).
	
	Taking
	\begin{equation*}
		\eta_\varepsilon
		=
		\frac{\varepsilon}{2(q_*-\varepsilon)}
	\end{equation*}
	completes the proof of Lemma~\ref{lem:deterministic-signal-crossing}.
\end{proof}

\subsection*{Proof of Lemma~\ref{lem:initial-crossing}}

\begin{proof}
	
	Let
	\begin{equation*}
		A_m
		=
		\log\left(\frac1{\alpha_m}\right),
		\qquad
		B_m
		=
		\log\left(\frac1{p_m}\right),
		\qquad
		D_m
		=
		\log(mp_m).
	\end{equation*}
	Since \(p_m\to0\), \(\alpha_m\to0\), and \(mp_m\to\infty\),
	\begin{equation*}
		A_m+B_m\to\infty,
		\qquad
		D_m\to\infty.
	\end{equation*}
	Moreover,
	\begin{equation}
		\label{eq:Lem3_log_identity}
		\log\left(\frac{m}{\alpha_m}\right)
		=
		A_m+B_m+D_m.
	\end{equation}
	
	Since
	\begin{equation*}
		c_1
		=
		\frac{\alpha_m}{m+\alpha_m}
		\sim
		\frac{\alpha_m}{m},
	\end{equation*}
	we have
	\begin{equation*}
		\log\left(\frac1{c_1}\right)
		=
		\log\left(\frac{m}{\alpha_m}\right)+o(1).
	\end{equation*}
	
	Let
	\begin{equation*}
		z_{c_1}
		=
		\Phi^{-1}
		\left(
		1-\frac{c_1}{2}
		\right).
	\end{equation*}
	
	By the standard Gaussian quantile expansion derived from Mills' ratio,
	\begin{equation}
		\label{eq:Lem3_quantile_expansion}
		z_{c_1}^{2}
		=
		2\log\left(\frac{m}{\alpha_m}\right)\{1+o(1)\}.
	\end{equation}
	
	Since \(p_m\to0\) and \(\alpha_m\to0\) as \(m\to\infty\),
	\begin{equation*}
		\log\left(\frac{f_m}{r_{\alpha_m}}\right)
		=
		\log\left(\frac1{\alpha_m p_m}\right)+o(1)
		=
		A_m+B_m+o(1).
	\end{equation*}
	
Combining this with the first condition in
\eqref{eq:gbs-calibration-condition}
	\begin{equation*}
		\frac{2\log(f_m/r_{\alpha_m})}{u_m}
		\to C
		\quad \textrm{as } m\to\infty,
	\end{equation*}
	we obtain
	\begin{equation}
		\label{eq:Lem3_um_expansion}
		u_m
		=
		\frac{2}{C}(A_m+B_m)\{1+o(1)\}.
	\end{equation}
	
	Put
	\begin{equation*}
		y_m
		=
		\frac{z_{c_1}}{\sqrt{1+u_m}}.
	\end{equation*}
	
	Using \eqref{eq:Lem3_log_identity},
	\eqref{eq:Lem3_quantile_expansion}, and
	\eqref{eq:Lem3_um_expansion}, we obtain
	\begin{align}
		\frac{y_m^{2}}{2}
		&=
		\frac{z_{c_1}^{2}}{2(1+u_m)}
		\notag\\
		&=
		\frac{C}{2}
		\frac{A_m+B_m+D_m}
		{A_m+B_m}
		\{1+o(1)\}
		\notag\\
		&=
		\frac{C}{2}
		\left\{
		1+
		\frac{D_m}{A_m+B_m}
		\right\}
		\{1+o(1)\}.
		\label{eq:Lem3_y_sq_bound}
	\end{align}
	
	Dividing \eqref{eq:Lem3_y_sq_bound} by \(D_m\), and using
	\(D_m\to\infty\) and \(A_m+B_m\to\infty\), we obtain
	\begin{equation}
		\label{eq:Lem3_y_sq_o_D}
		\frac{y_m^{2}}{2}
		=
		o(D_m).
	\end{equation}
	
	
	In particular,
	\begin{equation}
		\label{eq:Lem3_log_y_o_D}
		\log(1+y_m)
		=
		o(D_m).
	\end{equation}
	
	By Mills' lower bound, there exists a universal constant \(C_0>0\) such that
	\begin{equation}
		\label{eq:Lem3_Mills_lower}
		\bar\Phi(y)
		\ge
		C_0
		\frac{\exp(-y^{2}/2)}
		{1+y},
		\qquad y\ge0.
	\end{equation}
	
	Therefore,
	\begin{align}
		mp_mG_m(c_1)
		&=
		mp_m\,2\bar\Phi(y_m)
		\notag\\
		&\ge
		2C_0
		\exp(D_m)
		\frac{\exp(-y_m^{2}/2)}
		{1+y_m}.
		\label{eq:Lem3_mpG_lower}
	\end{align}
	
	Taking logarithms in \eqref{eq:Lem3_mpG_lower} and using
	\eqref{eq:Lem3_y_sq_o_D} and
	\eqref{eq:Lem3_log_y_o_D}, we obtain
	\begin{align}
		\log\{mp_mG_m(c_1)\}
		&\ge
		\log(2C_0)
		+
		D_m
		-
		\frac{y_m^{2}}{2}
		-
		\log(1+y_m)
		\notag\\
		&=
		D_m\{1+o(1)\}.
		\label{eq:Lem3_log_mpG_diverges}
	\end{align}
	
	Since \(D_m=\log(mp_m)\to\infty\), it follows from
	\eqref{eq:Lem3_log_mpG_diverges} that
	\begin{equation*}
		mp_mG_m(c_1)
		\to\infty
		\quad \textrm{as } m\to\infty.
	\end{equation*}
	
	This completes the proof of Lemma~\ref{lem:initial-crossing}.
	
\end{proof}

\subsection*{Proof of Lemma~\ref{lem:empirical-signal-crossing}}

\begin{proof}
	
	Let
	\begin{equation*}
		Q_{(1)}\le \cdots \le Q_{(K_m)}
	\end{equation*}
	denote the ordered signal \(p\)-values, and let
	\begin{equation*}
		c_j
		=
		\frac{j\alpha_m}
		{m+1-j(1-\alpha_m)},
		\qquad j=1,\ldots,m,
	\end{equation*}
	denote the GBS critical constants.
	
	We first prove that
	\begin{equation}
		\label{eq:Lem4_R_crossing_goal}
		\mathbb{P}(R_m^{\mathrm{GBS}}\ge r_m)\to1
		\quad \textrm{as } m\to\infty.
	\end{equation}
	It is enough to prove
	\begin{equation}
		\label{eq:Lem4_signal_order_crossing_goal}
		\mathbb{P}\left(Q_{(j)}\le c_j,\ j=1,\ldots,r_m\right)\to1
		\quad \textrm{as } m\to\infty.
	\end{equation}
	Indeed, on the event in \eqref{eq:Lem4_signal_order_crossing_goal}, necessarily
	\(K_m\ge r_m\). Moreover, since the full ordered \(p\)-values are no larger than
	the corresponding ordered signal \(p\)-values,
	\begin{equation*}
		P_{(j)}\le Q_{(j)},
		\qquad j=1,\ldots,r_m,
	\end{equation*}
	the event
	\begin{equation*}
		Q_{(j)}\le c_j,
		\qquad j=1,\ldots,r_m,
	\end{equation*}
	implies
	\begin{equation*}
		P_{(j)}\le c_j,
		\qquad j=1,\ldots,r_m.
	\end{equation*}
	
	By the definition of the GBS step-down procedure, this implies that the maximal rejection index satisfies \(R_m^{\mathrm{GBS}}\ge r_m\).
	
	For \(t\in(0,1)\), define
	\begin{equation*}
		N_1(t)
		=
		\sum_{i\in\mathcal H_0^c}\mathbf{1}\{P_i\le t\}.
	\end{equation*}
	Conditional on \(K_m=k\),
	\begin{equation*}
		N_1(t)\sim \mathrm{Bin}(k,G_m(t)).
	\end{equation*}
	Moreover,
	\begin{equation*}
		Q_{(j)}\le c_j
		\quad\textrm{if and only if}\quad
		N_1(c_j)\ge j.
	\end{equation*}
	Thus it suffices to show
	\begin{equation}
		\label{eq:Lem4_N1_crossing_goal}
		\mathbb{P}\left(N_1(c_j)\ge j,\ j=1,\ldots,r_m\right)\to1
		\quad \textrm{as } m\to\infty.
	\end{equation}
	
	Choose a sequence of positive integers \(\{L_m\}\) such that
	\(L_m\to\infty\) as \(m\to\infty\),
	\begin{equation*}
		L_m=o(mp_m),
		\qquad
		L_m=o(mp_mG_m(c_1)).
	\end{equation*}
	Such a choice is possible since \(mp_m\to\infty\) and, by
	Lemma~\ref{lem:initial-crossing}, \(mp_mG_m(c_1)\to\infty\).
	
	We first control the early ranks \(1\le j<L_m\). Since \(c_j\ge c_1\),
	\begin{equation*}
		N_1(c_j)\ge N_1(c_1),
		\qquad j\ge1.
	\end{equation*}
	Let
	\begin{equation*}
		\mathcal K_m^{(0)}
		=
		\left\{
		K_m\ge \frac12 mp_m
		\right\}.
	\end{equation*}
	Since \(K_m/(mp_m)\to1\) in probability,
	\begin{equation*}
		\mathbb{P}(\mathcal K_m^{(0)})\to1
		\quad \textrm{as } m\to\infty.
	\end{equation*}
	On \(\mathcal K_m^{(0)}\),
	\begin{equation*}
		\mathbb{E}\{N_1(c_1)\mid K_m\}
		=
		K_mG_m(c_1)
		\ge
		\frac12 mp_mG_m(c_1).
	\end{equation*}
	Since \(L_m=o(mp_mG_m(c_1))\), the Chernoff bound gives
	\begin{equation*}
		\mathbb{P}\{N_1(c_1)<L_m\mid K_m\}\to0
		\quad \textrm{as } m\to\infty,
	\end{equation*}
	uniformly on \(\mathcal K_m^{(0)}\). Hence
	\begin{equation}
		\label{eq:Lem4_early_c1_bound}
		\mathbb{P}\{N_1(c_1)<L_m\}\to0
		\quad \textrm{as } m\to\infty.
	\end{equation}
	Consequently,
	\begin{equation}
		\label{eq:Lem4_early_rank_conclusion}
		\mathbb{P}\left(N_1(c_j)\ge j,\ j=1,\ldots,L_m-1\right)\to1
		\quad \textrm{as } m\to\infty.
	\end{equation}
	
	We now consider the bulk ranks \(L_m\le j\le r_m\). Write
	\begin{equation*}
		q_j=\frac{j}{mp_m}.
	\end{equation*}
	Then
	\begin{equation*}
		0<q_j\le q_*-\varepsilon.
	\end{equation*}
	For these \(j\)'s, since \(L_m\to\infty\), we have
	\(j(1-\alpha_m)>1\) for all sufficiently large \(m\). Hence
	\begin{equation*}
		m+1-j(1-\alpha_m)\le m,
	\end{equation*}
	and therefore
	\begin{equation}
		\label{eq:Lem4_cj_lower_bound}
		c_j
		=
		\frac{j\alpha_m}
		{m+1-j(1-\alpha_m)}
		\ge
		\frac{j\alpha_m}{m}
		=
		q_j\alpha_m p_m.
	\end{equation}
	Since \(G_m\) is nondecreasing and \(c_j\ge q_j\alpha_m p_m\) by
	\eqref{eq:Lem4_cj_lower_bound}, Lemma~\ref{lem:deterministic-signal-crossing}
	implies that, for some \(\eta_\varepsilon>0\),
	\begin{equation}
		\label{eq:Lem4_Gm_bulk_lower_bound}
		G_m(c_j)
		\ge
		G_m(q_j\alpha_m p_m)
		\ge
		(1+\eta_\varepsilon)q_j
	\end{equation}
	uniformly over \(L_m\le j\le r_m\).
	
	Let
	\begin{equation*}
		\mathcal K_m
		=
		\left\{
		K_m\ge (1-\eta_\varepsilon/4)mp_m
		\right\}.
	\end{equation*}
	Since \(K_m/(mp_m)\to1\) in probability,
	\begin{equation*}
		\mathbb{P}(\mathcal K_m)\to1
		\quad \textrm{as } m\to\infty.
	\end{equation*}
	On \(\mathcal K_m\), by \eqref{eq:Lem4_Gm_bulk_lower_bound},
	\begin{align*}
		K_mG_m(c_j)
		&\ge
		(1-\eta_\varepsilon/4)mp_m(1+\eta_\varepsilon)q_j \\
		&=
		(1-\eta_\varepsilon/4)(1+\eta_\varepsilon)j.
	\end{align*}
	Since
	\[
	(1-\eta_\varepsilon/4)(1+\eta_\varepsilon)
	=
	1+\frac{3\eta_\varepsilon}{4}-\frac{\eta_\varepsilon^2}{4},
	\]
	we may decrease \(\eta_\varepsilon\), if necessary, so that
	\begin{equation*}
		(1-\eta_\varepsilon/4)(1+\eta_\varepsilon)
		\ge
		1+\eta_\varepsilon/2.
	\end{equation*}
	Thus
	\begin{equation}
		\label{eq:Lem4_mean_margin}
		K_mG_m(c_j)
		\ge
		(1+\eta_\varepsilon/2)j,
	\end{equation}
	uniformly over \(L_m\le j\le r_m\) on \(\mathcal K_m\).
	
	Therefore, by the Chernoff bound, conditional on \(K_m\),
	\begin{equation}
		\label{eq:Lem4_chernoff_bulk}
		\mathbb{P}\left(N_1(c_j)<j\mid K_m\right)
		\le
		\exp(-a_\varepsilon j)
	\end{equation}
	for some constant \(a_\varepsilon>0\), uniformly over \(L_m\le j\le r_m\)
	on \(\mathcal K_m\).
	
	Since \(\mathbb{P}(\mathcal K_m^c)=o(1)\), the union bound yields, for all
	sufficiently large \(m\),
	\begin{align}
		&\mathbb{P}\left(
		N_1(c_j)<j
		\textrm{ for some }L_m\le j\le r_m
		\right)
		\notag\\
		&\le
		\mathbb{P}\left(
		N_1(c_j)<j
		\textrm{ for some }L_m\le j\le r_m,\,
		\mathcal K_m
		\right)
		+
		\mathbb{P}(\mathcal K_m^c)
		\notag\\
		&\le
		\sum_{j=L_m}^{r_m}
		\exp(-a_\varepsilon j)
		+
		o(1).
		\label{eq:Lem4_bulk_failure_union}
	\end{align}
	Since \(L_m\to\infty\),
	\begin{equation*}
		\sum_{j=L_m}^{r_m} \exp(-a_\varepsilon j)
		\le
		\sum_{j=L_m}^{\infty} \exp(-a_\varepsilon j)
		=
		\frac{\exp(-a_\varepsilon L_m)}
		{1-\exp(-a_\varepsilon)}
		\to0
		\quad \textrm{as } m\to\infty.
	\end{equation*}
	Hence, by \eqref{eq:Lem4_bulk_failure_union},
	\begin{equation}
		\label{eq:Lem4_bulk_rank_conclusion}
		\mathbb{P}\left(
		N_1(c_j)<j
		\textrm{ for some }L_m\le j\le r_m
		\right)
		\to0
		\quad \textrm{as } m\to\infty.
	\end{equation}
	
	Combining \eqref{eq:Lem4_early_rank_conclusion} and
	\eqref{eq:Lem4_bulk_rank_conclusion}, we obtain
	\begin{equation}
		\label{eq:Lem4_N1_crossing_conclusion}
		\mathbb{P}\left(N_1(c_j)\ge j,\ j=1,\ldots,r_m\right)\to1
		\quad \textrm{as } m\to\infty.
	\end{equation}
	Since \(N_1(c_j)\ge j\) is equivalent to \(Q_{(j)}\le c_j\), and since
	\(P_{(j)}\le Q_{(j)}\) for \(j=1,\ldots,r_m\), the event in
	\eqref{eq:Lem4_N1_crossing_conclusion} implies
	\begin{equation*}
		P_{(j)}\le c_j,
		\qquad j=1,\ldots,r_m.
	\end{equation*}
	By the definition of the GBS step-down procedure, this implies
	\(R_m^{\mathrm{GBS}}\ge r_m\). Therefore,
	\begin{equation}
		\label{eq:Lem4_R_crossing_conclusion}
		\mathbb{P}(R_m^{\mathrm{GBS}}\ge r_m)\to1
		\quad \textrm{as } m\to\infty.
	\end{equation}
	
	We now translate this crossing result into a Type-II error bound. Let
	\[
	A_m=\{R_m^{\mathrm{GBS}}\ge r_m\}.
	\]
	Write \(S_m^{\mathrm{GBS}}\) for the number of rejected signal hypotheses. Since
	\[
	S_m^{\mathrm{GBS}}
	=
	R_m^{\mathrm{GBS}}-V_m^{\mathrm{GBS}},
	\]
	on \(A_m\) we have
	\[
	S_m^{\mathrm{GBS}}
	\ge
	r_m-V_m^{\mathrm{GBS}}.
	\]
	Therefore, on \(A_m\),
	\begin{align*}
		T_m^{\mathrm{GBS}}
		&=
		K_m-S_m^{\mathrm{GBS}} \\
		&\le
		K_m-r_m+V_m^{\mathrm{GBS}}.
	\end{align*}
	On \(A_m^c\), the trivial bound \(T_m^{\mathrm{GBS}}\le K_m\) holds. Hence
	\begin{equation}
		\label{eq:Lem4_T_pointwise_bound}
		T_m^{\mathrm{GBS}}
		\le
		K_m-r_m+V_m^{\mathrm{GBS}}
		+
		r_m\mathbf{1}\{R_m^{\mathrm{GBS}}<r_m\}.
	\end{equation}
	Indeed, on \(A_m\) this is the preceding bound, while on \(A_m^c\) the
	right-hand side is at least \(K_m\).
	
	Taking expectations gives
	\begin{equation}
		\label{eq:Lem4_T_expectation_bound}
		\mathbb{E}(T_m^{\mathrm{GBS}})
		\le
		\mathbb{E}(K_m)-r_m
		+
		\mathbb{E}(V_m^{\mathrm{GBS}})
		+
		r_m\mathbb{P}(R_m^{\mathrm{GBS}}<r_m).
	\end{equation}
	
	Now \(\mathbb{E}(K_m)=mp_m\), and
	\begin{equation*}
		r_m=(q_*-\varepsilon)mp_m+O(1).
	\end{equation*}
	By Corollary~\ref{cor:type1-bound},
	\begin{equation*}
		\mathbb{E}(V_m^{\mathrm{GBS}})
		=
		O(\alpha_mmp_m)
		=
		o(mp_m),
	\end{equation*}
	because \(\alpha_m\to0\). Moreover, by
	\eqref{eq:Lem4_R_crossing_conclusion},
	\[
	\mathbb{P}(R_m^{\mathrm{GBS}}<r_m)\to0,
	\]
	and since \(r_m=O(mp_m)\),
	\begin{equation}
		\label{eq:Lem4_remainder_goal}
		r_m\mathbb{P}(R_m^{\mathrm{GBS}}<r_m)
		=
		o(mp_m).
	\end{equation}
	
	Consequently, by \eqref{eq:Lem4_T_expectation_bound},
	\begin{align*}
		\mathbb{E}(T_m^{\mathrm{GBS}})
		&\le
		mp_m-(q_*-\varepsilon)mp_m+o(mp_m) \\
		&=
		mp_m(1-q_*+\varepsilon)+o(mp_m).
	\end{align*}
	This concludes the proof of Lemma~\ref{lem:empirical-signal-crossing}.
	
\end{proof}

\bibliographystyle{apalike}
\bibliography{BSD_Reference}

\end{document}